\theoremstyle{plain}
\theoremstyle{plain}
\newtheorem{theorem}{Theorem} [section]
\newtheorem{corollary}[theorem]{Corollary}
\newtheorem{lemma}[theorem]{Lemma}
\newtheorem{proposition}[theorem]{Proposition}
\theoremstyle{definition}
\theoremstyle{remark}
\newtheorem{remark}[theorem]{Remark}
\DeclareMathOperator{\di}{div}
\numberwithin{theorem}{section}
\numberwithin{equation}{section}
\numberwithin{figure}{section}
\newcommand{\pl}{\left(}
\newcommand{\pr}{\right)}
\newcommand{\Ric}{\text{\rm Ric}}  
\newcommand{\Sph}{\mathbb{S}}
\newcommand{\R}{\mathbb{R}}
\newcommand{\D}{\nabla}
\newcommand{\He}{\text{\rm Hess}}
\newcommand{\la}{\langle}
\newcommand{\ra}{\rangle}
\newcommand{\Si}{\Sigma}
\newcommand{\dsigma}{\text{\rm d}\sigma}
\newcommand{\dt}{\frac{\text{\rm d}}{\text{\rm d}t}}
\newcommand{\dr}{\frac{\text{\rm d}}{\text{\rm d}r}}
\newcommand{\ddt}{\frac{\text{\rm d}^2}{\text{\rm d}t^2}}
\newcommand{\rhobt}{\overline{\rho_t}}
\begin{document}

\title[Hawking mass and rigidity of minimal two-spheres]{Hawking mass and local rigidity of minimal two-spheres in three-manifolds}
\author[Davi M{\'{a}}ximo]{Davi M{\'{a}}ximo}
\address{Department of Mathematics, The University of Texas, 1 University Station C1200, Austin, TX 78712}
\email{maximo@math.utexas.edu}
\author[Ivaldo Nunes]{Ivaldo Nunes}
\address{Instituto Nacional de Matem\'{a}tica Pura e Aplicada - IMPA, Estrada Dona Castorina 110, 22460-320, Rio de Janeiro-RJ, Brazil}
\email{ivaldo82@impa.br}

\date{}
\maketitle
\begin{abstract}
We study rigidity of minimal two-spheres $\Sigma$ that locally maximize the Hawking mass on a Riemannian three-manifold with a positive lower bound on its scalar curvature. After assu\-ming strict stability of $\Sigma$, we prove that a neighborhood of it in $M$ is isometric to one of the deSitter-Schwarzschild metrics on $(-\epsilon,\epsilon)\times \Sigma$. We also show that if $\Sigma$ is a critical point for the Hawking mass on the deSitter-Schwarzschild manifold $\mathbb{R}\times\Sph^2$ and can be written as a graph over a slice $\Sigma_r=\{r\}\times\mathbb{S}^2$, then $\Sigma$ itself must be a slice,  and moreover that slices are indeed local maxima amongst competitors that are graphs with small $C^2$-norm.
\end{abstract}

%%%%%%%%%%%%%%%% INTRODUCTION %%%%%%%%%%%%%%

\section{Introduction}
In the last decades, stable minimal surfaces have proven to be a very important tool in studying Riemannian three-manifolds $(M,g)$ with scalar curvature bounded below. 

It was Schoen and Yau who first observed in \cite{SY2} that the second variation formula of area provides an interesting interplay between the scalar curvature of a three-manifold $(M,g)$ and the total curvature of a stable minimal surface $\Sigma\subset M$, which in turn is related to the topology of $\Sigma$. As a consequence, they proved that if $(M,g)$ has nonnegative scalar curvature and $\Sigma$ is two-sided and compact, then either $\Sigma$ is a two-sphere or a totally geodesic flat two-torus.

Motivated by the above result of Schoen and Yau, Cai and Galloway \cite{CG} later showed that if $(M,g)$ is a three-manifold with nonnegative scalar curvature and $\Sigma$ is an embedded two-torus which is locally of least area (which is a condition stronger than stability), then $\Sigma$ is flat and totally geodesic, and $M$ splits isometrically as a product $(-\epsilon,\epsilon)\times\Sigma$ in a neigborhood of $\Sigma$. Recently, the correspoding rigidity result in the case where $\Sigma$ is either a two-sphere or a compact Riemann surface of genus greater than $1$ were proved by Bray, Brendle, and Neves \cite{BBN} and by the second author \cite{N}, respectively. We note that Micallef and Moraru \cite{MM} later found an alternative argument to prove these splitting results. Moreover, a similar rigidity result for area-minimizing projective planes was obtained in \cite{BBEN}.

Perhaps one of the most important uses of the above relation between the scalar curvature of a three-manifold $(M,g)$ and the total curvature of a stable minimal surface $\Sigma\subset M$ was in the proof of the positive mass theorem given by Schoen and Yau \cite{SY1}. The positive mass theorem is a fundamental result which relates Riemannian geometry and general relativity. It states that the ADM mass of a complete asymptotically flat three-manifold $(M,g)$ with nonnegative scalar curvature is always nonnegative and is only zero when $M$ is isometric to the flat Euclidean space $\mathbb{R}^3$. Later, Witten \cite{W} gave an independent proof of the positive mass theorem using spin methods.

Another important result in the context of general relativity which involves minimal surfaces and scalar curvature is the Penrore inequality proved by Huisken and Ilmanen \cite{HI}, and independently, by Bray \cite{B}. It states that if $(M,g)$ is a complete asymptotically flat three-manifold with nonnegative scalar curvature and boundary $\Sigma=\partial M\neq\emptyset$ being an outermost minimal two-sphere, then the ADM mass of $M$ is greater than or equal to the Haw\-king mass of $\Sigma$, with equality attained if, and only if, $M$ is isometric to the one-half of the Schwarzschild metric on $\mathbb{R}^3\setminus\{0\}$. We recall that the {\it Hawking mass} of a compact surface $\Sigma\subset(M,g)$, denoted by $m_\text{\rm H}(\Sigma)$, is defined as 
\begin{equation}\label{hawkingmass}
m_\text{\rm H}(\Sigma)=\pl\dfrac{|\Sigma|}{16\pi}\pr^{1/2}\pl1-\dfrac{1}{16\pi}\int_\Sigma H^2\,\dsigma-\dfrac{\Lambda}{24\pi}\,|\Sigma|\pr,
\end{equation}
where $H$ is the mean curvature of $\Sigma$ and $\Lambda=\inf_M R$.

The Schwarzschild metrics on $\mathbb{R}^3\setminus\{0\}$ can be seen as complete rotationally symmetric metrics on $\mathbb{R}\times\mathbb{S}^2$ with zero scalar curvature and the slice $\Sigma_0=\{0\}\times\mathbb{S}^2$ being the outermost minimal two-sphere. Each Schwarzschild metric is determined by the Hawking mass of $\Sigma_0$.
These metrics appear as spacelike slices of the Schwarzschild vacuum spacetime in general relativity.

Another class of metrics on $\mathbb{R}\times\mathbb{S}^2$ is the deSitter-Schwarzschild metrics. These metrics are complete periodic rotationally symmetric metrics on $\mathbb{R}\times\mathbb{S}^2$ with constant positive scalar curvature, and have $\Sigma_0=\{0\}\times\mathbb{S}^2$ as a stricly stable minimal two-sphere. They appear as spacelike slices of the deSitter-Schwarzschild spacetime, which is a solution to the vacuum Einstein equation with a positive cosmo\-lo\-gi\-cal cons\-tant. The deSitter-Schwarzschild metrics constitute a one-parameter family of metrics $\{g_a\}_{a\in(0,1)}$ and, in this work, we scale each $g_a$ to have scalar curvature equal to $2$ (see Section 2 for a more detailed description). 

In the present paper, we prove some results concerning the deSitter-Schwarzschild metrics $g_a$ on $\mathbb{R}\times\mathbb{S}^2$.  

We begin by considering the general situation of a two-sided closed surface $\Sigma$ which is a critical point of the Hawking mass on a three-manifold $(M,g)$ with $R\geqslant2$.  By writing the Euler-Lagrange equation of the mass (see Proposition \ref{firstvariation} of the Appendix), we prove that whenever $\Sigma$ has nonnegative mean curvature then it must be minimal or umbilic with $R=2$ along $\Sigma$ and constant Gauss curvature. 

In particular, whenever $M$ is the deSitter-Schwarzschild manifold $(\mathbb{R}\times\mathbb{S}^2, g_a)$, the above says that critical points of the Hawking mass are either minimal surfaces or  slices $\{ r\} \times \mathbb{S}^2$. 

\begin{remark}\label{remark0}
To the best of our knowledge, there is no complete classification of minimal surfaces in $(\mathbb{R}\times\mathbb{S}^2, g_a)$  to be found in the literature. However, when the minimal  surface $\Sigma$ can be written as a graph over a slice $\Sigma_r$, then a result of Montiel \cite{Mo} says that $\Sigma$ must itself be a slice. See also \cite{Br}.
\end{remark}

The above considerations are evidence that local maxima of the Hawking mass in $(\mathbb{R}\times\mathbb{S}^2, g_a)$ must be slices. In our first main result we show that slices are indeed local maxima in the following sense: 

%%%%%%%%%%%%%%%% THEOREM 1 %%%%%%%%%%%%%%%%%%%%%
\begin{theorem}\label{theorem1}
Let $\,\Sigma_r=\{r\}\times\mathbb{S}^2$ be a slice of the deSitter-Schwarszchild manifold $(\mathbb{R}\times\mathbb{S}^2,g_a)$. Then there exists an $\epsilon=\epsilon(r)>0$ such that if ${\Sigma}\subset\mathbb{R}\times\mathbb{S}^2$ is an embedded two-sphere which is a normal graph over $\Sigma_r$ given by $\varphi\in C^{2}(\Sigma_r)$ with $\left\|\varphi\right\|_{C^{2}(\Sigma_r)}<\epsilon$, one has%Moreover, there exist a neighborhood $V_r$ of $\Sigma=\Sigma_r$ in $(\mathbb{R}\times\mathbb{S}^2,g_a)$ such that if $\widehat{\Sigma}\subset V_r$ is an embedded two-sphere which is a graph over $\Sigma$, then 
\begin{itemize}
\item[(i)] either $m_\text{\rm H}({\Sigma})<m_\text{\rm H}(\Sigma_r)$;
\item[(ii)] or ${\Sigma}$ is a slice $\Sigma_s$ for some $s$.
\end{itemize}
\end{theorem}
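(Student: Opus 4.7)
The plan is to parametrize surfaces near $\Sigma_r$ by their normal-graph height $\varphi \in C^2(\mathbb{S}^2)$ and analyze the functional $F(\varphi) := m_\text{\rm H}(\Sigma_\varphi)$ via its Taylor expansion at $\varphi \equiv 0$. It suffices to show $F(\varphi) \leq F(0)$ with equality forcing $\varphi$ to be constant; constants correspond to slices $\Sigma_{r+\varphi}$, yielding alternative (ii), while strict inequality gives alternative (i).

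Write the deSitter-Schwarzschild metric in warped-product form $g_a = dr^2 + h_a(r)^2 g_{\mathbb{S}^2}$. The normalization $R \equiv 2$ is equivalent to the ODE $1 - (h_a')^2 - h_a^2 - 2 h_a h_a'' = 0$. A direct computation from $(\ref{hawkingmass})$ using $|\Sigma_s| = 4\pi h_a(s)^2$ and $H(\Sigma_s) = 2h_a'(s)/h_a(s)$ gives
\begin{equation*}
m_\text{\rm H}(\Sigma_s) = \tfrac{1}{2}\, h_a(s) \bigl(1 - (h_a'(s))^2 - \tfrac{1}{3} h_a(s)^2 \bigr),
\end{equation*}
and invoking the ODE shows $\frac{d}{ds} m_\text{\rm H}(\Sigma_s) \equiv 0$. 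Hence $F$ is constant along constants, and in particular $F(\varphi) = F(0)$ whenever $\varphi$ is constant. Moreover, by the Euler-Lagrange identity from Proposition A.1 of the Appendix, each slice is a critical point of $m_\text{\rm H}$, so $dF|_0 \equiv 0$.

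The heart of the proof is the second variation $Q(\varphi) := d^2 F|_0(\varphi, \varphi)$. Using the warped-product structure, expanding the area form, mean curvature, and scalar-curvature integrand of $\Sigma_\varphi$ to second order in $\varphi$ yields a quadratic form of the shape
\begin{equation*}
Q(\varphi) = \int_{\mathbb{S}^2} \Bigl[\, \alpha_1 (\Delta_{\mathbb{S}^2}\varphi)^2 + \alpha_2\, |\nabla_{\mathbb{S}^2}\varphi|^2 \,\Bigr]\, d\sigma_{\mathbb{S}^2},
\end{equation*}
with coefficients $\alpha_i = \alpha_i(h_a(r), h_a'(r))$; any $\varphi^2$-term must vanish, as it must, from the constancy $F(c) \equiv F(0)$ noted above. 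In particular, $Q$ depends only on derivatives of $\varphi$, so decomposing $\varphi = c + \psi$ with $c$ constant and $\int_{\mathbb{S}^2}\psi\, d\sigma_{\mathbb{S}^2} = 0$ gives $Q(\varphi) = Q(\psi)$ with no cross-term to manage.

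The main obstacle is establishing the spectral inequality $Q(\psi) \leq -\lambda(r)\, \|\psi\|_{H^2(\mathbb{S}^2)}^2$ on the subspace of zero-mean functions. Diagonalizing in spherical harmonics $\psi = \sum_{\ell \geq 1}\psi_\ell$ with $-\Delta_{\mathbb{S}^2}\psi_\ell = \ell(\ell+1)\psi_\ell$ reduces this to verifying, for some $\lambda > 0$ and all $\ell \geq 1$,
\begin{equation*}
\alpha_1\, \ell(\ell+1) + \alpha_2 \leq -\lambda\, [1 + \ell(\ell+1)],
\end{equation*}
which, after substituting the ODE for $h_a$, boils down to the two inequalities $\alpha_1 < 0$ and $2\alpha_1 + \alpha_2 < 0$; the $\ell = 1$ modes are the most delicate, as these represent infinitesimal translations of $\mathbb{S}^2$ and could a priori lie in the kernel. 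Once this spectral inequality is established, a standard Taylor remainder estimate of the form $|F(\varphi) - F(0) - \tfrac{1}{2}Q(\varphi)| \leq C\, \|\varphi\|_{C^2}\, \|\varphi\|_{H^2}^2$ combined with $\|\psi\|_{H^2} \leq C\, \|\varphi\|_{C^2}$ yields $F(\varphi) < F(0)$ for $\|\varphi\|_{C^2} < \epsilon$ sufficiently small, unless $\psi \equiv 0$, in which case $\Sigma_\varphi = \Sigma_{r+c}$ is itself a slice.
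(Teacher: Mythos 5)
Your overall strategy --- Taylor-expand the mass at the critical slice, prove strict negativity of the second variation on zero-mean functions, absorb the remainder --- is the same as the paper's, and your spherical-harmonics diagonalization is in principle a legitimate substitute for the paper's compactness argument (Rellich--Kondrachov plus elliptic regularity) for the $W^{2,2}$-coercivity, since the induced metric on a slice is round. But there is a genuine gap at the heart of the argument: your claimed shape of $Q$ is wrong, and the reasoning offered for it is invalid. Specializing Proposition \ref{secondvariation} at a slice (as done at the end of the Appendix) gives
\begin{align*}
Q(\varphi)&=-\frac{|\Sigma_r|^{1/2}}{32\pi^{3/2}}\int_{\Sigma_r}(\Delta\varphi)^2\,\dsigma
+\left(\frac{1}{4\pi^{1/2}|\Sigma_r|^{1/2}}-\frac{3\,m_a}{2|\Sigma_r|}\right)\int_{\Sigma_r}|\D\varphi|^2\,\dsigma\\
&\quad+\frac{3\,m_a}{4|\Sigma_r|}H^2\int_{\Sigma_r}(\varphi-\overline{\varphi})^2\,\dsigma,
\end{align*}
so there \emph{is} a zeroth-order term, and it carries the unfavorable positive sign whenever $u'(r)\neq 0$. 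The constancy $F(c)\equiv F(0)$ only forces the zeroth-order part of $Q$ to annihilate constants; it does not force it to vanish, and indeed it equals $\alpha_3\int(\varphi-\overline{\varphi})^2$ with $\alpha_3=\frac{3m_a}{4|\Sigma_r|}H^2\geqslant 0$. Your reduction to ``$\alpha_1<0$ and $2\alpha_1+\alpha_2<0$'' therefore discards exactly the term that could destroy negativity; the corrected per-mode condition is $\alpha_1\,\ell^2(\ell+1)^2+\alpha_2\,\ell(\ell+1)+\alpha_3\leqslant-\lambda\left(1+\ell(\ell+1)+\ell^2(\ell+1)^2\right)$ for all $\ell\geqslant 1$. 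Moreover, you never verify any of these inequalities --- ``after substituting the ODE ... boils down to'' is where the entire analytic content lives. On the $\ell=1$ modes, which you rightly flag as delicate, the biharmonic term cancels the positive conformal gradient term \emph{exactly} (on a round sphere of Gauss curvature $K$ one has $\int(\Delta\psi_1)^2=2K\int|\D\psi_1|^2$ for first harmonics), and what survives per unit $L^2$-norm is $\frac{3m_a}{4\pi u(r)^4}\left(u'(r)^2-1\right)$: strict negativity there rests entirely on $m_a>0$ and $u'(r)^2<1$, equivalently $H^2<16\pi/|\Sigma_r|$. This is precisely the content of the paper's Proposition \ref{proposition2}, proved there via B\"ochner--Weitzenb\"ock together with the sharp Poincar\'e inequality $\lambda_1=8\pi/|\Sigma_r|$; without that computation your spectral inequality is an assertion, not a proof.

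There is a second, smaller gap in the absorption step: with $\varphi=c+\psi$, the remainder bound $\left|F(\varphi)-F(0)-\tfrac12 Q(\varphi)\right|\leqslant C\|\varphi\|_{C^2}\|\varphi\|_{H^2}^2$ permits errors of size $C\epsilon\,c^2$, which is not dominated by $\tfrac12 Q(\psi)\leqslant-\lambda\|\psi\|_{H^2}^2$ when $\|\psi\|_{H^2}\ll|c|$; your auxiliary inequality $\|\psi\|_{H^2}\leqslant C\|\varphi\|_{C^2}$ points the wrong way and does not repair this. You must get the constant mode out of the remainder. The paper does this by exploiting the constancy of the mass along slices to re-expand around the nearby slice $\Sigma_{r+\overline{\varphi}}$ (the ``mutatis mutandis'' step), so that the graph function has essentially zero mean and one obtains $m_\text{\rm H}(\Sigma_r)-m_\text{\rm H}(\Sigma)\geqslant c\,\|\varphi-\overline{\varphi}\|_{W^{2,2}}^2$, with equality of the masses only when $\varphi$ is constant, i.e.\ when $\Sigma$ is a slice.
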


%%%%%% IDEA OF THE PROOF OF THEOREM 1 %%%%%%%%%%%

The proof follows by showing that the second variation of the mass at each slice is strictly negative, unless the variation has constant speed, and using this to argue mini\-mality amongst surfaces that are graphs with small $C^2$ norm over the slice $\Sigma_r$. 

\begin{remark}
A general formula for the second variation on an arbitrary three-manifold is given in Proposition \ref{secondvariation} of the Appendix.  
\end{remark}

Our second result is a  local rigidity for the  deSitter-Schwarzschild mani\-fold $(\mathbb{R}\times\mathbb{S}^2,g_a)$ which involves strictly stable minimal surfaces and the Hawking mass. We prove:

%%%%%%%%%%%%%%%% THEOREM 2 %%%%%%%%%%%%%%%%%%%%%%%

\begin{theorem}\label{theorem2}
Let $(M,g)$ be a Riemannian three-manifold with scalar curvature $R\geqslant 2$. If $\Sigma\subset M$ is an embedded strictly stable minimal two-sphere which locally maximizes the Hawking mass, then the Gauss curvature of $\Sigma$ is constant equal to $1/a^2$ for some $a\in(0,1)$ and a neighborhood of $\Sigma$ in $(M,g)$ is isometric to the deSitter-Schwarzschild metric $((-\epsilon,\epsilon)\times\Sigma, g_a)$ for some $\epsilon>0.$ 

\end{theorem}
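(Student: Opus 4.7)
The plan is to exploit strict stability of $\Sigma$ to build a constant mean curvature (CMC) foliation $\{\Sigma_t\}_{t\in(-\epsilon,\epsilon)}$ of a tubular neighborhood of $\Sigma_0=\Sigma$, and then push the local maximality of the Hawking mass into rigidity via the evolution equations along this foliation. Strict stability ensures that the Jacobi operator $L=\Delta+|A|^2+\Ric(\nu,\nu)$ is invertible on $\Sigma$, so the implicit function theorem produces a smooth family $\{\Sigma_t\}$ of embedded two-spheres, each a normal graph over $\Sigma$, with constant mean curvature $h(t)$, positive lapse $\rho_t=\la\partial_t,\nu_t\ra$, and $h(0)=0$.

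Along this foliation, since $\int_{\Sigma_t}H^2\dsigma=h(t)^2A(t)$ with $A(t)=|\Sigma_t|$, the Hawking mass with $\Lambda=2$ becomes
\[
m_\text{\rm H}(\Sigma_t)=\pl\frac{A(t)}{16\pi}\pr^{1/2}\pl1-\frac{h(t)^2A(t)}{16\pi}-\frac{A(t)}{12\pi}\pr,
\]
and maximality forces $m_\text{\rm H}(\Sigma_t)\leqslant m_\text{\rm H}(\Sigma_0)$ for small $|t|$. Combining the first variation of area $A'(t)=h(t)\int_{\Sigma_t}\rho_t\dsigma$ with the CMC Jacobi identity $h'(t)A(t)=-\int_{\Sigma_t}(|A|^2+\Ric(\nu,\nu))\rho_t\dsigma$, and rewriting $|A|^2+\Ric(\nu,\nu)=\tfrac{1}{2}|A|^2+\tfrac{1}{2}R+\tfrac{1}{2}H^2-K$ via the Gauss equation, one derives an evolution identity for $m_\text{\rm H}(\Sigma_t)$. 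Using $R\geqslant 2$, Gauss-Bonnet $\int_{\Sigma_t}K\dsigma=4\pi$, and strict stability of $\Sigma$, this identity can be converted into a differential inequality whose equality case, imposed by the local maximality, forces $R\equiv 2$ along each leaf, each $\Sigma_t$ totally umbilic, and its Gauss curvature a positive constant. In particular, on $\Sigma_0$ one gets Gauss curvature equal to $1/a^2$ for some $a\in(0,1)$, with $a<1$ pinned down by $|\Sigma_0|=4\pi a^2$ and the sign of the mass term.

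With each leaf umbilic of constant Gauss curvature, and $R\equiv 2$ in the tubular neighborhood, the metric takes the warped product form $g=\rho(t)^2\,dt^2+r(t)^2 g_{\Sph^2}$ with $\rho,r$ functions of $t$ alone. The ODE system satisfied by $(r,\rho)$, determined by $R=2$ together with $h(t)=2r'(t)/(\rho(t)r(t))$ and initial data $r(0)=a$, $r'(0)=0$, is precisely the one characterizing the deSitter-Schwarzschild family $g_a$ described in Section~2, so matching initial data yields the claimed isometry.

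The principal obstacle lies in going from an infinitesimal condition at $t=0$ to full pointwise rigidity across the entire foliation: a second-order jet at $t=0$ alone does not suffice. Following the strategy developed by Bray-Brendle-Neves \cite{BBN} and Nunes \cite{N} for the corresponding area-rigidity results, one exploits the \emph{global} maximality inequality in $t$, integrated against the evolution identity above, to propagate the curvature identities uniformly across all leaves of the foliation; this monotonicity-style argument is the technical heart of the proof.
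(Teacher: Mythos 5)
Your skeleton (CMC foliation plus monotonicity of the Hawking mass along the leaves) is indeed the paper's strategy, but you have skipped the step that makes the whole construction possible: the \emph{infinitesimal rigidity} at $t=0$. Strict stability and the implicit function theorem give a one-parameter family of CMC spheres, but they do \emph{not} give a foliation with positive lapse: the normal speed $\frac{\partial w}{\partial t}(0,\cdot)$ may change sign, a point the paper flags explicitly at the start of Section 5. To rule this out one must first prove the equality $(1+\lambda_1(L))\,|\Sigma|=4\pi$, which the paper obtains by combining two inequalities: the stability inequality with test function $\varphi=1$ gives $|\Sigma|\leqslant 4\pi/(\lambda_1(L)+1)$, and the nonpositivity of the \emph{second variation of the Hawking mass} at the minimal $\Sigma$ (tested with a first eigenfunction) gives the reverse bound. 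You never invoke the second variation at all, so this equality is unavailable in your argument. Equality is what forces $A=0$, $R=2$ on $\Sigma$, $\Ric(\nu,\nu)=-\lambda_1(L)$, and $\mathrm{Ker}(L+\lambda_1(L))=\{\text{constants}\}$; only then is $L=\Delta_\Sigma-\lambda_1(L)$, so that the implicit-function-theorem family can be normalized to have $\frac{\partial w}{\partial t}(0,\cdot)\equiv 1$ and genuinely foliate. It also delivers $K_\Sigma=4\pi/|\Sigma|=1/a^2$ with $a^2=1/(1+\lambda_1(L))\in(0,1)$ \emph{because} $\lambda_1(L)>0$ — your ``$a<1$ pinned down by the sign of the mass term'' does not substitute for this.

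Two further ingredients of the monotonicity step are elided in your sketch and do not follow from ``strict stability of $\Sigma$'' alone. First, the sign of $H_t$: the derivative formula for the mass along the foliation carries an overall factor $-H_t$, so one needs $H_t<0$ for $t>0$ and $H_t>0$ for $t<0$; this comes from $\left.\dt\right|_{t=0}H_t=L(1)=-\lambda_1(L)<0$, which again uses the infinitesimal rigidity (to know $L(1)$ is the constant $-\lambda_1$) together with strict stability. Second, the positivity of the bracket multiplying $-H_t$ requires \emph{weak stability of the nearby leaves} $\Sigma(t)$ with the uniform spectral gap $\lambda_1(L_\Sigma)$ (the paper's Lemma 5.2 and Lemma 5.3, which compare $\int_{\Sigma(t)}(\Ric(\nu_t,\nu_t)+|A_{\Sigma(t)}|^2)\rho_t\,\dsigma_t$ with its lapse-averaged version); Lemma 5.2 is proved using the fact that $\sup_{\Sigma(t)}(\Ric(\nu_t,\nu_t)+|A_{\Sigma(t)}|^2+\lambda_1(L_\Sigma))\to0$ as $t\to0$ — once more a consequence of the infinitesimal rigidity. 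So the ``monotonicity-style argument'' you defer to \cite{BBN} and \cite{N} is not merely technical bookkeeping: without the equality case of the area bound, the foliation, the sign of $H_t$, and the leafwise stability estimate all fail, and the differential inequality you posit cannot be derived. The final warped-product identification via the ODE \eqref{EDO1} is fine once $\rho_t\equiv\overline{\rho_t}$ and umbilicity of every leaf are in hand.
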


%%%%% IDEA OF THE PROOF OF THEOREM 2 %%%%%%%

The idea of the proof goes as follows. Let $\lambda_1(\Sigma)$ denote the first eigenvalue of the Jacobi operator. The first step is to prove an infini\-tesimal rigidity along $\Sigma$ which is  obtained as follows.  Using the fact that $\Sigma$ is strictly stable we get an upper bound of the form
\begin{equation}\label{aux}
(1+\lambda_1(\Sigma))|\Sigma|\leqslant 4\pi.
\end{equation}
 On the other hand, the fact that $\Sigma$ locally maximizes the Hawking mass implies $\eqref{aux}$ with opposite inequality sign. Therefore equality is achieved and from it the infi\-ni\-te\-simal rigidity is attained.
  
 From this infinitesimal rigidity we next are able to construct a {\it constant mean curvature} ({CMC})  foliation of a neighborhood of $\Sigma$ by embedded two-spheres  $\{\Sigma(t)\subset M\}_{t\in(-\epsilon,\epsilon)}$, where $\Sigma_0=\Sigma$. Finally, by using the pro\-per\-ties of the CMC foliation $\Sigma(t)$ we obtain, decreasing $\epsilon$ if necessary, a monotonicity of the Hawking mass along $\Sigma(t)$. In particular, we get that $m_\text{\rm H}(\Sigma(t))\geqslant m_\text{\rm H}(\Sigma)$
 for all $t\in(-\epsilon,\epsilon)$. The rigidity result then follows from this.

%%%%%%%%%%%%%%%%%%% REMARK %%%%%%%%%%%%%%%%%%%%%%%%
Some remarks are now in order. First we point out that the upper bound \eqref{aux} involving $\lambda_1(\Sigma)$ is sharp and is achieved on strictly minimal slices in the deSitter-Schwarzschild manifold $(\mathbb{R}\times\mathbb{S}^2,g_a)$. In case $\Sigma$ is stable with $\lambda_1(L)=0$, \eqref{aux} is the area bound that appear in \cite{BBN}, which is attained on slices of the standard cylinder $(\mathbb{R}\times\mathbb{S}^2, \text{\rm d}r^2 + g_{\Sph^2})$.

 Moreover, we note that $g_a$ tends to $\text{\rm d}r^2 + g_{\Sph^2}$ when  $a\rightarrow1$, so it is interesting to ask whether the rigidity statement in \cite{BBN} can be proven under the same hypothesis of Theorem \ref{theorem2} but with strict stability replaced by stability. It turns out this is not the case as one can construct examples of three-manifolds with scalar curvature $R\geq 2$ that are not the standard cylinder and that contain a minimal two-sphere $\Sigma$ with area equal to $4\pi$, see $e.g.$ page 2 of \cite{MM}. It is then straightforward to check that a minimal two-sphere with area $4\pi$ is a global maximum of the Hawking mass.
 
\begin{remark}
The use of a CMC foliation in the proof above is inspired by the work in \cite{Bray} and \cite{BBN} (see also \cite{N}, \cite{MM})
%The rigidity result above and the use of the CMC foliation in its proof were greatly motivated by the work in \cite{BBN}. There, they proved that if $(M^3,g)$ has scalar curvature $R\geqslant 2$ and $\Sigma\subset M$ is a locally area-minimizing embedded two-sphere with area $4\pi$, then $\Sigma$ has Gauss curvature one and $M$ splits isometrically as a product $(-\epsilon,\epsilon)\times\Sigma$ in a neighborhood of $\Sigma$. Note that if a minimal two-sphere $\Sigma\subset M$ has area $4\pi$, then it attains the maximum possible value of the Hawking mass for two-spheres in $M$.
\end{remark}

%%%%%%%%%%%%%%%% PRELIMINARIES %%%%%%%%%%%%%%%%%%%%%%

\section{Preliminaries}

In this section, we start defining the deSitter-Schwarzschild manifold. The deSitter-Schwarzschild metric with mass $m>0$ and scalar curvature equal to 2 is the metric
\[
\pl1-\dfrac{r^2}{3}-\dfrac{2\,m}{r}\pr^{-1}dr^2+r^2g_{\mathbb{S}^2}
\]
defined on $(a_0,b_0)\times\mathbb{S}^2$, where $(a_0,b_0)=\left\{r>0:1-\frac{r^2}{3}-\frac{2\,m}{r}
>0\right\}$ and $g_{\mathbb{S}^2}$ is the standard metric on $\mathbb{S}^2$ with constant Gauss curvature equal to $1$. 

The deSitter-Schwarzschild metric above extends to a smooth metric $g$ on $[a_0,b_0]\times\mathbb{S}^2$ and the boundary components $\{a_0\}\times\mathbb{S}^2$ and $\{b_0\}\times\mathbb{S}^2$ are totally geodesic two-spheres with respect to the metric $g$. Thus, after reflection along $\{a_0\}\times\mathbb{S}^2$, we find a smooth metric $g$ on $[c_0,b_0]\times\mathbb{S}^2$, where $c_0=2a_0-b_0.$ Since $\{c_0\}\times\mathbb{S}^2$ and $\{b_0\}\times\mathbb{S}^2$ are totally geodesic two-spheres, we can use the metric $g$ on $[c_0,b_0]\times\mathbb{S}^2$ to define, by reflection,  a complete periodic rotationally symmetric metric on $\mathbb{R}\times\mathbb{S}^2$ with scalar curvature equal to $2$. This metric is called the \textit{deSitter-Schwarszchild metric} with mass $m>0$ and scalar curvature equal to 2 on $\mathbb{R}\times\mathbb{S}^2$.

In order to deal with this metric in our paper, we will use the warped product notation. More precisely, consider the warped product metric $g=\textrm{d}r^2+u(r)^2g_{\mathbb{S}^2}$ on $\mathbb{R}\times\mathbb{S}^2$, where $u(r)$ is a positive real function. If we assume that $g$ has constant scalar curvature equal to 2, then $u$ solves the following second-order differential equation
\begin{equation}\label{EDO1}
u^{\prime\prime}(r)=\dfrac{1}{2}\pl\dfrac{1-u^\prime(r)^2}{u(r)}\pr-\dfrac{u(r)}{2}.
\end{equation}

Considering only positive solutions $u(r)$ to \eqref{EDO1} which are defined for all $r\in\mathbb{R}$, we get a one-parameter family of periodic rotationally symmetric metrics $g_a=\textrm{d}r^2+u_a(r)^2g_{\mathbb{S}^2}$ with constant scalar curvature equal to $2$, where $a\in(0,1)$ and $u_a(r)$ satisfies $u_a(0)=a=\min u$ and $u^{\prime}_a(0)=0$. These metrics are precisely the deSitter-Schwarzschild metrics on $\mathbb{R}\times\mathbb{S}^2$ defined above.

%%%%%%%%%%%%%%%%% REMARK %%%%%%%%%%%%%%%%%%%%%%%%%%

\begin{remark}\label{remark1}
Note that when $a$ tends to $1$, the metric $g_a$ tends to the standard product metric $\textrm{d}r^2+g_{\mathbb{S}^2}$ on $\mathbb{R}\times\mathbb{S}^2$. Moreover, observe that $\Sigma_0=\{0\}\times\mathbb{S}^2$ is a strictly stable minimal (in fact, totally geodesic) two-sphere of area $4\pi a^2$ in $(\mathbb{R}\times\mathbb{S}^2,g_a)$, for each $a\in(0,1)$, but  in the standard product metric $\textrm{d}r^2+g_{\mathbb{S}^2}$ on $\mathbb{R}\times\mathbb{S}^2$, i.e, in the limit as $a\rightarrow 1$, $\Sigma_0$ is only stable and not strictly so.
\end{remark}

%%%%%% FIRST VARIATION OF THE HAWKING MASS %%%%%%%%%%%%

%\begin{proposition}
%[First variation of the Hawking mass]\label{firstvariation} Let $\Sigma\subset M$ be an immersed compact surface. Given a normal variation $\Sigma(t)\subset M$ of $\Sigma$ with variational vector field $X=\varphi\,\nu$, we have
%\begin{align*}\label{Dmass}
%\begin{split}
%\dt m_\textrm{H}(\Sigma(t))&\Big|_{t=0}=-\dfrac{|\Sigma|^{1/2}}{32\pi^{3/2}}\int_\Sigma\Delta H\varphi\,\dsigma\\
%+&\dfrac{|\Sigma|^{1/2}}{64\pi^{3/2}}\int_\Sigma\cle2\,K_\Sigma-\dfrac{8\pi}{|\Sigma|}
%+\dfrac{1}{2}\pl H^2-\dfrac{1}{|\Sigma|}\int_\Sigma H^2\dsigma\pr\cri H\varphi\,\dsigma\\
%+&\dfrac{|\Sigma|^{1/2}}{64\pi^{1/2}}\int_\Sigma\cle\pl\dfrac{H^2}{2}-|A|^2\pr+\pl2-R\pr\cri H\varphi\,\dsigma.
%\end{split}
%\end{align*}
%\end{proposition}
%\begin{proof}
%\end{proof}

%%%%%%%%%%% REMARK %%%%%%%%%%%%%%%%%%%%

\begin{remark}\label{remark2}
It follows from the first variation formula of the Hawking mass (see Appendix) that if a two-sphere $\Sigma\subset M$ is umbilic and has constant Gauss curvature and $M$ has constant scalar curvature equal to 2 along $\Sigma$, then $\Sigma$ is a critical point of the Hawking mass. 
\end{remark}

Denote by $\Sigma_r$ the slice $\{r\}\times\mathbb{S}^2$. By Remark \ref{remark2}, $\Sigma_r$ is a critical point for the Hawking mass in $(\mathbb{R}\times\mathbb{S}^2,g_a)$, for all $r\in\mathbb{R}$ and $a\in(0,1)$. Moreover, we note that the Hawking mass of $\Sigma_r\subset(\mathbb{R}\times\mathbb{S}^2,g_a)$ is constant for all $r\in\mathbb{R}$. It follows by a straightforward computation:
\[
\dr m_\text{\rm H}(\Sigma_r)=\dfrac{1}{2}u^\prime(r)(1-u^\prime(r)-u(r)^2-2u(r)u^{\prime\prime}(r)),
\]
which is zero once $u(r)$ solves \eqref{EDO1}, we obtain therefore that $m_\text{\rm H}(\Sigma_r)$ is constant equal to $m_\text{\rm H}(\Sigma_0)$. We will denote by $m_a$ this constant value. Thus, in what follows, $g_a$ is the deSitter-Schwarzschild metric with mass $m_a$ and scalar curvature equal to $2$ on $\mathbb{R}\times\mathbb{S}^2$.

%%%%%%%%%%%%%%% PROOF OF THEOREM 1 %%%%%%%%%%%%%%%%%%%%%

\section{Proof of Theorem \ref{theorem1}}

We establish the following proposition before going into the proof of Theo\-rem \ref{theorem1}.

%%%%%%%%%% PROPOSITION %%%%%%%%%%%%%%%%%%%%%%%

\begin{proposition}\label{proposition1}
Let $(M,g)$ be a Riemannian three-manifold with scalar curvature $R\geqslant 2$. If a two-sided closed surface $\Sigma\subset M$ with nonnegative mean curvature is a critical point of the Hawking mass, then $\Sigma$ is minimal or $\Sigma$ is umbilic, $R=2$ along $\Sigma$, and $\Sigma$ has constant Gauss curvature.
\end{proposition}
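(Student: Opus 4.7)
The plan is to write down the Euler--Lagrange equation for $m_\text{H}$, use the strong maximum principle to handle the minimal case, and then exploit a division-by-$H$ integration trick together with Gauss--Bonnet to reveal the rigidity as a sum of manifestly nonnegative terms that must all vanish.

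First I compute the first variation of $m_\text{H}(\Sigma)$ under a normal deformation with speed $\varphi$, using $\tfrac{d}{dt}\dsigma = H\varphi\,\dsigma$ and $\tfrac{dH}{dt} = -\Delta\varphi - (|A|^{2}+\Ric(\nu,\nu))\varphi$. The $3$-dimensional Gauss equation lets me rewrite $|A|^{2}+\Ric(\nu,\nu) = \tfrac{3H^{2}}{4} + \tfrac{|\mathring A|^{2}}{2} + \tfrac{R}{2} - K$, and after integration by parts the criticality condition $\dot m_\text{H}=0$ for every $\varphi$ produces the pointwise Euler--Lagrange equation
\[
2\Delta H + H\left(\tfrac{H^{2}}{2} + |\mathring A|^{2} + R - 2K + \tfrac{8\pi Q}{|\Sigma|}\right) = 0, \qquad Q := 1 - \tfrac{1}{16\pi}\int_{\Sigma}H^{2}\,\dsigma - \tfrac{\Lambda|\Sigma|}{8\pi},
\]
where $\mathring A$ is the traceless second fundamental form and $Q$ is a constant depending only on $\Sigma$.

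Second, this is a linear homogeneous elliptic equation in $H$ with continuous coefficients, so the hypothesis $H\geqslant 0$ together with Hopf's strong maximum principle yields the dichotomy: either $H\equiv 0$, in which case $\Sigma$ is minimal and we are done; or $H>0$ everywhere on $\Sigma$. In the latter case I divide the Euler--Lagrange equation by $H$, integrate over $\Sigma$, and apply $\int \Delta H / H\,\dsigma = \int |\nabla H|^{2}/H^{2}\,\dsigma$ (which comes from $\Delta\log H = \Delta H/H - |\nabla H|^{2}/H^{2}$) together with Gauss--Bonnet $\int_{\Sigma} K\,\dsigma = 2\pi\chi(\Sigma)$. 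The $\tfrac{1}{2}\int H^{2}$ and $\Lambda|\Sigma|$ contributions cancel precisely against the explicit form of $Q$, leaving the identity
\[
2\int_{\Sigma}\frac{|\nabla H|^{2}}{H^{2}}\,\dsigma + \int_{\Sigma}|\mathring A|^{2}\,\dsigma + \int_{\Sigma}(R-\Lambda)\,\dsigma + 4\pi(2 - \chi(\Sigma)) = 0.
\]

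Every summand is nonnegative (the third by $R\geqslant\Lambda$, the fourth by $\chi(\Sigma)\leqslant 2$), so each must vanish individually: $H$ is constant, $\Sigma$ is umbilic, $R\equiv\Lambda$ (i.e.\ $R=2$) along $\Sigma$, and $\chi(\Sigma)=2$ so that $\Sigma$ is a topological sphere. Feeding these back into the Euler--Lagrange equation with $\Delta H=0$ produces $2K = \tfrac{H^{2}}{2} + \Lambda + \tfrac{8\pi Q}{|\Sigma|}$, whose right-hand side is constant, so $K$ is also constant on $\Sigma$. I expect the main obstacle to be the first step: the Hawking mass is a product of three nonlinear quantities, and the bookkeeping needed to make the resulting integral decompose into a sum of nonnegative pieces requires the constants from the Gauss equation, Gauss--Bonnet, and the cosmological coefficient $\Lambda/(24\pi)$ to conspire exactly; tracking these signs and factors is where the computation is most likely to go wrong.
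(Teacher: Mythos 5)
Your proposal is correct and follows essentially the same route as the paper: your Euler--Lagrange equation is (twice) the paper's $\Delta_\Sigma H + QH = 0$ with the bracket expanded via $|A|^2 = |\mathring A|^2 + H^2/2$, and the subsequent steps --- strong maximum principle giving $H\equiv 0$ or $H>0$, division by $H$, integration by parts to produce $\int_\Sigma |\nabla H|^2/H^2\,\dsigma$, and Gauss--Bonnet --- reproduce exactly the paper's argument, with your four-term nonnegative decomposition being precisely the equality analysis hidden in the paper's claim $\int_\Sigma Q\,\dsigma \geqslant 0$. The only cosmetic differences are that you derive the first variation inline (the paper cites its appendix), your sign conventions for $\dot H$ and $\dot{\dsigma}$ are opposite to the paper's but internally consistent, and you keep $\Lambda$ general until the end (concluding $R=\Lambda$, identified with $2$), which matches the paper's implicit normalization $\Lambda=2$.
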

\begin{proof}
Let $H$ be the mean curvature of $\Sigma$. By Proposition \ref{firstvariation}, the Euler-Lagrange equation for the Hawking mass functional is 
\begin{equation}\label{EL}
\Delta_\Sigma H+QH=0,
\end{equation}
where
\[
Q=\dfrac{4\pi}{|\Sigma|}-K_\Sigma+\dfrac{1}{2}(R-2)+\dfrac{1}{4}\pl2|A|^2-\dfrac{1}{|\Sigma|}\int_\Sigma H^2\,\dsigma\pr
\]
and satisfies the condition:
\[
\int_\Sigma Q\,\dsigma\geqslant 0,
\]
with equality if, and only if, $\Sigma$ is umbilic.

Since $H\geqslant 0$, we can apply the maximum principle to \eqref{EL} to obtain that either $H\equiv 0$ or $H>0$.

Now, suppose that $H>0$. In this case, by \eqref{EL}, we have that
\[
\dfrac{1}{H}\Delta_\Sigma H+Q=0,
\]
which we integrate by parts and get 
\[
0=\int_\Sigma\dfrac{|\D H|^2}{H^2}\,\dsigma+\int_\Sigma Q\,\dsigma\geqslant0.
\]

Thus, $\int_\Sigma Q\,\dsigma=0$ which implies that $\Sigma$ is umbilic and $R$ is constant equal to $2$ along $\Sigma$. Moreover, we also get that $H$ is a constant function. So, we conclude that $Q=0$. Since $\Sigma$ is umbilic and $R$ is constant along $\Sigma$, we also obtain that Gauss curvature of $\Sigma$ is constant equal $\frac{4\pi}{|\Sigma|}$.
\end{proof}

%%%%%%%%%%%%%%%%% COROLLARY %%%%%%%%%%%%%%%%%

As a immediate consequence of the above proposition we have:

\begin{corollary}
A two-sided closed surface with nonnegative mean curvature in the deSitter-Schwarzshild manifold $(\mathbb{R}\times\mathbb{S}^2,g_a)$ is a critical point of the Hawking mass if and only if is minimal or is a slice $\{r\}\times\mathbb{S}^2$. 
\end{corollary}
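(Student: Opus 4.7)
The plan is to combine Proposition~\ref{proposition1} with the fact that $R \equiv 2$ on the deSitter-Schwarzschild manifold. The "if" direction is essentially immediate: if $\Sigma$ is minimal, then $H \equiv 0$ satisfies the Euler--Lagrange equation $\Delta_\Sigma H + QH = 0$ trivially; and if $\Sigma = \Sigma_r$ is a slice, then it is totally umbilic, has constant Gauss curvature $1/u_a(r)^2$, and sits in an ambient of scalar curvature $2$, so Remark~\ref{remark2} gives that it is a critical point of the Hawking mass.

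For the converse, let $\Sigma$ be a two-sided closed critical point with $H\geq 0$. Since $R\equiv 2$ everywhere on $(\mathbb{R}\times\mathbb{S}^2,g_a)$, Proposition~\ref{proposition1} reduces to the following dichotomy: either $\Sigma$ is minimal (in which case we are done), or $\Sigma$ is totally umbilic with constant Gauss curvature $K = 4\pi/|\Sigma| > 0$. Gauss--Bonnet then forces $\Sigma$ to be a topological two-sphere, and since $A = \tfrac{H}{2}\, g_\Sigma$ with $H$ a positive constant, the second fundamental form of $\Sigma$ is actually parallel.

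The main obstacle is the final step: showing that such a totally umbilic CMC two-sphere in $(\mathbb{R}\times\mathbb{S}^2,g_a)$ must itself be a slice. My approach is to exploit the conformal Killing field $X = u_a(r)\partial_r$ on the deSitter-Schwarzschild manifold, which satisfies $\mathcal{L}_X g_a = 2\, u_a'(r)\, g_a$. Splitting $X = X^T + \langle X,\nu\rangle \nu$ along $\Sigma$ and using umbilicity, one obtains $\di_\Sigma X^T = 2 u_a'(r) - H\,\langle X,\nu\rangle$, which upon integrating by parts produces the Minkowski-type identity
\[
\int_\Sigma u_a'(r)\,\dsigma \;=\; \frac{H}{2}\int_\Sigma \langle X,\nu\rangle\,\dsigma.
\]
Combined with a Heintze--Karcher type inequality for warped products, the equality case of this identity forces $r|_\Sigma$ to be constant, and hence $\Sigma = \Sigma_s$ for some $s\in\mathbb{R}$. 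Alternatively, one may invoke the classification of umbilic/CMC hypersurfaces in warped products in the spirit of Montiel~\cite{Mo} (cited in Remark~\ref{remark0}), which directly covers the totally umbilic case at hand.
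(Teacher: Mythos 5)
Your overall skeleton is exactly the paper's: the paper states this corollary with no proof at all beyond the phrase ``as an immediate consequence'' of Proposition \ref{proposition1}, and your ``if'' direction (for $H\equiv 0$ every term of the first variation carries a factor of $H$ or $\Delta_\Sigma H$; for a slice, Remark \ref{remark2} applies) together with the dichotomy of Proposition \ref{proposition1} is surely what the authors intend. The one step that is genuinely \emph{not} immediate --- that a closed umbilic surface with constant mean curvature $H>0$ in $(\mathbb{R}\times\mathbb{S}^2,g_a)$ must be a slice --- is where your proposal has a gap: both of your suggested closings rely on hypotheses the statement does not grant. The Minkowski-plus-Heintze--Karcher route is Brendle's Alexandrov theorem \cite{Br}, which requires $\Sigma$ to be \emph{embedded} (the Heintze--Karcher inequality integrates over the domain bounded by $\Sigma$), whereas the corollary assumes only a two-sided closed surface; it is moreover partly circular in your situation, since equality in Heintze--Karcher yields umbilicity, which you already have from Proposition \ref{proposition1}, so the entire remaining content is the equality-case analysis that you treat as a black box. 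Likewise, Montiel's theorem as invoked in Remark \ref{remark0} applies to surfaces that are graphs over a slice, and nothing in your setting guarantees $\Sigma$ is a graph. (A small point in the same direction: your identity $\di_\Sigma X^T=2u_a'-H\langle X,\nu\rangle$ does not use umbilicity at all; it holds for any closed surface, so the Minkowski formula by itself carries no umbilic information.)

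The step can instead be closed elementarily, for merely immersed surfaces, via Codazzi. Since $H$ is constant and $A=\frac{H}{2}\,g_\Sigma$, the second fundamental form is parallel, so the Codazzi equation forces $\bar{R}(X,Y,Z,\nu)=0$ for all $X,Y,Z$ tangent to $\Sigma$, where $\bar{R}$ is the ambient curvature tensor. For the warped product $g_a=dr^2+u_a(r)^2g_{\mathbb{S}^2}$ the curvature tensor differs from a constant-curvature tensor by a term built from $dr\otimes dr$ and proportional to $K_{\mathrm{sph}}-K_{\mathrm{rad}}$, where $K_{\mathrm{rad}}=-u''/u$ and $K_{\mathrm{sph}}=(1-u'^2)/u^2$ are the radial and spherical sectional curvatures; concretely, for tangent $X,Y,Z$ one finds that $\bar{R}(X,Y,Z,\nu)$ is a nonzero multiple of
\[
\pl K_{\mathrm{sph}}-K_{\mathrm{rad}}\pr\langle\partial_r,\nu\rangle\pl\langle X,Z\rangle\,\langle\nabla_\Sigma r,Y\rangle-\langle Y,Z\rangle\,\langle\nabla_\Sigma r,X\rangle\pr.
\]
Using \eqref{EDO1} and its first integral $u(1-u'^2)-u^3/3=2m_a$, one computes $K_{\mathrm{sph}}-K_{\mathrm{rad}}=3m_a/u^3>0$ everywhere, since $m_a>0$ for $a\in(0,1)$. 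Hence at each point either $\langle\partial_r,\nu\rangle=0$ or $\nabla_\Sigma r=0$, and because $1=|\nabla_\Sigma r|^2+\langle\partial_r,\nu\rangle^2$, the continuous function $\langle\partial_r,\nu\rangle$ takes values in $\{0,\pm1\}$ and is therefore constant on the connected surface $\Sigma$. The value $0$ is impossible, as it would give $|\nabla_\Sigma r|\equiv1$ on a closed surface, contradicting the vanishing of $\nabla_\Sigma r$ at a maximum of $r$. Thus $\nu=\pm\partial_r$, $r$ is constant on $\Sigma$, and $\Sigma$ is a slice. With this replacement your argument is complete and needs neither embeddedness nor a graph hypothesis; note also that this computation explains why the corollary fails to single out slices in the limit case $a=1$, where $m_a=0$ and the ambient is the round cylinder.
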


We now turn to prove Theorem \ref{theorem1}. First we prove

%%%%%%%%%%%%%%%%% PROPOSITION %%%%%%%%%%%%%%%%%

\begin{proposition}\label{proposition2}
Let $(\mathbb{R}\times\mathbb{S}^2,g_a)$ be the deSitter-Schwarzschild manifold with mass $m_a$ and let $\Sigma_r=\{r\}\times\mathbb{S}^2$. Then, there exists a constant $C=C(\Sigma_r)>0$ such that for all smooth normal variation $\Sigma(t)$ of $\Sigma_r$ 
\[
\left.\ddt\right|_{t=0} m_\text{\rm H}(\Sigma(t))\leqslant -C\int_{\Sigma_r}(\varphi-\overline{\varphi})^2\,\dsigma_r,
\]
where $\varphi\in C^{\infty}(\Sigma_r)$ is the function which gives the variation and $\overline{\varphi}=\frac{1}{|\Sigma_r|}\int_{\Sigma_r}\varphi\,\dsigma.$
\end{proposition}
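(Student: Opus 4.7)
The approach is to apply the general second variation formula from Proposition~\ref{secondvariation} at the slice $\Sigma_r$ and exploit the symmetries of the deSitter-Schwarzschild setting. The slice $\Sigma_r$ is totally umbilic with constant mean curvature $H_r=2u_a'(r)/u_a(r)$, constant Gauss curvature $1/u_a(r)^2$, and sits in a rotationally symmetric three-manifold of scalar curvature $2$. Under these simplifications, the Hessian of $m_\text{\rm H}$ at $\Sigma_r$ takes the form $\varphi\mapsto\int_{\Sigma_r}\varphi\,L_r\varphi\,\dsigma_r$ for some self-adjoint, $SO(3)$-equivariant linear operator $L_r$ on $L^2(\Sigma_r)$. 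Its eigenvalues on $\ell$-th spherical harmonics grow like $-\ell^4$, coming from the $(\partial_t H)^2$ term (with $\partial_t H=-(\Delta_{\Sigma_r}\varphi+\ldots)$) in the second variation of $\int_\Sigma H^2\,\dsigma$ inside $m_\text{\rm H}$.

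Next, I would split $\varphi=\overline{\varphi}+\psi$ with $\int_{\Sigma_r}\psi\,\dsigma_r=0$. Because $r$-coordinate lines are geodesics in the warped product, the constant variation $\varphi\equiv\overline{\varphi}$ flows $\Sigma_r$ through the slice family $\{\Sigma_{r+t\overline{\varphi}}\}$, along which $m_\text{\rm H}$ is identically $m_a$; so $\int_{\Sigma_r}L_r 1\,\dsigma_r=0$. But $L_r\cdot 1$ is also $SO(3)$-invariant and hence constant on $\Sigma_r$, so that constant must vanish: $L_r\cdot 1\equiv 0$. Self-adjointness of $L_r$ then eliminates the cross terms and yields
\[
\left.\ddt\right|_{t=0} m_\text{\rm H}(\Sigma(t))=\int_{\Sigma_r}\psi\,L_r\psi\,\dsigma_r.
\]
The problem thus reduces to showing that $L_r$ is strictly negative on the $L^2$-orthogonal complement of constants, with a uniform gap.

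The remaining step is spectral. By $SO(3)$-equivariance, $L_r$ is diagonal in the spherical harmonic basis on the round sphere $\Sigma_r$; let $\mu_\ell$ denote its eigenvalue on the $\ell$-th eigenspace. Since $\mu_\ell\sim -c(r)\ell^4\to-\infty$, only finitely many small $\ell$ need to be checked directly, and setting $C(\Sigma_r)=\min_{\ell\geq 1}(-\mu_\ell/|\Sigma_r|)>0$ produces the claimed inequality by Parseval. The main obstacle is the verification that $\mu_1<0$: in the round sphere in Euclidean space, the $\ell=1$ eigenfunctions correspond to infinitesimal translations and make a pure Jacobi operator degenerate on this eigenspace. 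Here one must use the ODE~\eqref{EDO1} satisfied by $u_a$ to show that the additional Hawking-mass contributions (beyond the pure area functional) break this potential degeneracy and yield a strictly negative $\mu_1$. The $r$-dependence of $C(\Sigma_r)$ is intrinsic, since $\mu_1$ depends explicitly on $u_a(r)$ and $u_a'(r)$ through the ODE.
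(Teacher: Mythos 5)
Your reduction is sound as far as it goes: the second variation at a slice is indeed a quadratic form $\int_{\Sigma_r}\varphi\,L_r\varphi\,\dsigma_r$ that annihilates constants (your symmetry argument for $L_r1\equiv 0$ is correct, and can also be read off directly from the explicit slice formula at the end of the Appendix, all of whose terms involve only $\D\varphi$, $\Delta\varphi$, or $\varphi-\overline{\varphi}$), and the $-\ell^4$ growth coming from the $-(\Delta\varphi)^2$ term disposes of all sufficiently high modes. But the argument stops exactly where the proposition lives: you never verify that $\mu_\ell<0$ for the remaining low modes, in particular $\mu_1$ — you only assert that ``one must use the ODE'' to break the translational degeneracy. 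That verification is not deferrable bookkeeping; it is the entire content of the statement. Concretely, with $u=u_a(r)$, $|\Sigma_r|=4\pi u^2$, $H^2=4u'^2/u^2$, and $\lambda_\ell=\ell(\ell+1)/u^2$, the slice formula gives, on normalized $\ell$-th eigenfunctions,
\[
\mu_\ell=\frac{\lambda_\ell}{8\pi u}\left(1-\frac{\ell(\ell+1)}{2}\right)+\frac{3\,m_a}{4\pi u^4}\left(u'^2-\frac{\ell(\ell+1)}{2}\right),
\]
and for $\ell=1$ the first bracket vanishes identically: the biharmonic term exactly cancels the positive gradient term, so the strict negativity of $\mu_1$ rests entirely on $m_a>0$ and $u'(r)^2<1$, the latter being a consequence of the conserved quantity $u(1-u'^2)-u^3/3=2m_a>0$ of solutions to \eqref{EDO1}. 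None of this computation is in your write-up, and without it the claimed constant $C(\Sigma_r)=\min_{\ell\geqslant1}(-\mu_\ell)/|\Sigma_r|$ is not known to be positive.

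For comparison, the paper's proof sidesteps the mode decomposition altogether: it applies the B\"ochner--Weitzenb\"ock identity on the round slice to absorb the positive $\int|\D\varphi|^2$ term into $-\int(\Delta\varphi)^2$ (these cancel exactly — the integrated shadow of your $\ell=1$ cancellation), then uses the Poincar\'e inequality together with $H^2=16\pi/|\Sigma_r|-C$, $C>0$ (from $u'^2<1$), to produce the uniform gap for all modes in one stroke. Your spectral route would be a correct alternative once the finite low-mode check above is actually carried out, but as submitted it is incomplete at its crux.
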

\begin{proof}
First, we have (see Appendix):
\begin{align*}
\left.\ddt\right|_{t=0}m_\text{\rm H}(\Sigma(t))&=-\frac{|\Sigma|^{1/2}}{32\pi^{3/2}}\int_{\Sigma_r}(\Delta\varphi)^2\,\dsigma_r\\
&\hspace{-3.0cm}+\frac{1}{4\pi^{1/2} |\Sigma|^{1/2}}\int_{\Sigma_r}|\D\varphi|^2\,\dsigma-\dfrac{3\,m_a}{2|\Sigma_r|}\int_{\Sigma_r}|\D\varphi|^2\,\dsigma_r\\
&+\dfrac{3\,m_a}{4|\Sigma_r|}H^2\int_{\Sigma_r}(\varphi-\overline{\varphi})^2\,\dsigma_r,
\end{align*}
where $H$ is the mean curvature of $\Sigma_r$ and the gradients and Laplacians are computed on $\Sigma_r$.

Next, by the B\"ochner-Weitzenb\"ock identity applied on $\Sigma_r$
\begin{align*}
\dfrac{1}{2}\Delta|\D\varphi|^2&=|\He\varphi|^2+\langle\D\Delta\varphi,\D\varphi\rangle+\Ric(\D\varphi,\D\varphi)\\
&\geqslant\dfrac{1}{2}(\Delta\varphi)^2+\langle\D\Delta\varphi,\D\varphi\rangle+K_{\Sigma_r}|\D\varphi|^2\\
&=\dfrac{1}{2}(\Delta\varphi)^2+\langle\D\Delta\varphi,\D\varphi\rangle+\dfrac{4\pi}{|\Sigma_r|}|\D\varphi|^2,
\end{align*}
which once we integrate over $\Sigma_r$ we have 
$$-\frac{1}{2}\int_{\Sigma_r}(\Delta\varphi)^2\,\dsigma_r\leqslant-\frac{4\pi}{|\Sigma_r|}\int_{\Sigma_r}|\D\varphi|^2\,\dsigma_r.$$
This in turn imply
\[
\left.\ddt\right|_{t=0}m_\text{\rm H}(\Sigma(t))\leqslant-\dfrac{3}{2}\dfrac{m_a}{|\Sigma_r|}\int_{\Sigma_r}|\D\varphi|^2\,\dsigma_r+\dfrac{3}{4}\dfrac{m_a}{|\Sigma_r|}H^2\int_{\Sigma_r}(\varphi-\overline{\varphi})^2\,\dsigma_r.
\]
Moreover, since $g|{\Sigma_r}=u(r)^2g_{\mathbb{S}^2}$, we have by the Poincar\'e inequality
\begin{align*}
\int_{\Sigma_r}|\D\varphi|^2\,\dsigma_r&\geqslant\dfrac{2}{u(r)^2}\int_{\Sigma_r}(\varphi-\overline{\varphi})^2\,\dsigma_r\\
&=\dfrac{8\pi}{|\Sigma_r|}\int_{\Sigma_r}(\varphi-\overline{\varphi})^2\,\dsigma_r,
\end{align*}
and therefore we have
\[
\left.\ddt\right|_{t=0} m_\text{\rm H}(\Sigma(t))\leqslant-12\pi\dfrac{m_a}{|\Sigma_r|^2}\int_{\Sigma_r}(\varphi-\overline{\varphi})^2\,\dsigma_r+\dfrac{3}{4}\dfrac{m_a}{|\Sigma_r|}H^2\int_{\Sigma_r}(\varphi-\overline{\varphi})^2\,\dsigma_r.
\]

Finally, we note that since $H^2=4\frac{u^\prime(r)^2}{u(r)^2}$ and $u^\prime(r)^2<1$ we have that $H^2=\dfrac{16\pi}{|\Sigma_r|}-C$, where $C=C(\Sigma_r)>0$ is a positive constant, and thus
\[
\left.\ddt\right|_{t=0}m_\text{\rm H}(\Sigma(t))\leqslant-C\int_{\Sigma_r}(\varphi-\overline{\varphi})^2\,\dsigma_r.
\]
\end{proof}

%%%%%% END OF PROOF OF THM 1 %%%%%%%%%%

\begin{proof}[Proof of Theorem \ref{theorem1}]
To prove Theorem \ref{theorem1} we will use an argument adap\-ted from \cite{BrM} and \cite{DPM}. Suppose ${\Sigma}$ is a graph over a slice $\Sigma_r$ given by a function $\varphi\in C^2(\Sigma_r)$. Assume the average $\bar{\varphi}$ of $\varphi$ is zero and let $\mathcal{L}$ be the operator given by the second variation of the Hawking mass:
\begin{align*}
\langle\mathcal{L}\varphi,\varphi\rangle=&-\dfrac{|\Sigma_r|^{1/2}}{32\pi^{3/2}}\int_{\Sigma_r}(\Delta\varphi)^2\,\dsigma_r+\dfrac{1}{4\pi^{1/2}|\Sigma_r|^{1/2}}\int_{\Sigma_r}|\D\varphi|^2\,\dsigma_r\\
&-\dfrac{3}{2}\dfrac{m_a}{|\Sigma_r|}\int_{\Sigma_r}|\D\varphi|^2\,\dsigma_r+\dfrac{3}{4}\frac{m_a}{|\Sigma_r|} H^2 \int_{\Sigma_r}\varphi^2\,\dsigma_r,
\end{align*}
By the computation in Proposition \ref{secondvariation}, we have
\begin{equation}\label{massdiff}
m_\textrm{H}({\Sigma})-m_\textrm{H}(\Sigma_r)=  \frac{1}{2}\langle \mathcal{L\varphi,\varphi}\rangle + O(||\varphi||_{C^2}||\varphi||^2_{W^{2,2}}),
\end{equation} 
where the constant in the Big-$O$ notation is uniform in $\varphi$, i.e., depends only on the slice $\Sigma$, and $W^{k,p}$ is usual notation for the Sobolev spaces.

We next claim that there must exist a constant $C>0$ such that for any function $h$ of zero average:
\begin{equation}\label{ineqstab}
\left|\langle \mathcal{L}h,h\rangle\right| \geq C ||h ||^2_{W^{2,2}}.
\end{equation}
We prove the above by contradiction: assuming the contrary, there will exist a sequence of functions $h_n$ such that 
$$ ||h_n ||^2_{W^{2,2}}=1,\qquad  \left|\langle \mathcal{L}h_n,h_n\rangle\right| <\frac{1}{n}. $$
By the Rellich-Kondrachov theorem, up to subsequence, $h_n$ must converge in $W^{1,2}$ to a limit $h$ with zero average. We would like to conclude that $ \left|\langle \mathcal{L}h,h\rangle\right| =0$, but for that we would need $h\in W^{2,2}$. So we argue as follows. First, by Proposition \ref{secondvariation}, we note that $ \left|\langle \mathcal{L}\cdot,\cdot \rangle\right| $ controls the $L^2$-norm, and since  $\left|\langle \mathcal{L}h_n,h_n\rangle\right|\rightarrow 0$, we have that $h_n$ converges to zero in $L^2$, and therefore $h$ must equal to zero. Finally, by the definition of $\mathcal{L}$, we observe that because there exists positive constants $C_1,C_2$ independent of $n$ such that:
$$\left|\langle \mathcal{L}h_n,h_n\rangle\right|\geq C_1 ||\Delta h_n||^2_{L^2} - C_2 ||h_n||_{W^{1,2}},$$
so $\Delta h_n$ must converge to zero in $L^2$, and therefore by elliptic regularity $||h_n||_{W^{2,2}}\rightarrow 0$, which is a contradiction since $||h_n ||_{W^{2,2}}=1$, and the claim follows.

Hence, combining \eqref{massdiff} and  \eqref{ineqstab}, we have for functions $\varphi$ of zero average and sufficiently small $C^2$-norm that
$$m_\textrm{H}({\Sigma})-m_\textrm{H}(\Sigma_r) \geq \frac{C}{4} ||\varphi||^2_{W^{2,2}},$$
and, by changing the argument {\it mutatis mutandis}, we have more ge\-nerally that for any $\varphi$ such that $\varphi-\bar{\varphi}$ has sufficiently small $C^2$-norm:
$$m_\textrm{H}({\Sigma})-m_\textrm{H}(\Sigma_r) \geq \frac{C}{4} ||\varphi-\bar{\varphi}||^2_{W^{2,2}}$$
and this concludes our proof.
\end{proof}

% STABILITY AND SECOND VARIATION OF THE HAWKING MASS %%%%

\section{Stability and second variation of the Hawking mass}

Given a surface $\Sigma$ in a three-manifold $(M,g)$, the Jacobi operator of $\Sigma$, denoted by $L_\Sigma$, or just by $L$ if there is no ambiguity, is defined to be 
\[
L=\Delta_\Sigma+\Ric(\nu,\nu)+|A|^2,
\]
where $\nu$ and $A$ denote the unit normal vector field along $\Sigma$ and the second fundamental form of $\Sigma$, respectively. We denote by $\lambda_1(L)$ the first eigenvalue of $L$. Our convention for the eigenvalue problem is the following:

\begin{center}
$\lambda\in\mathbb{R}$ is an eigenvalue of $L$ $\Leftrightarrow$ $\exists$ $\varphi\in C^{\infty}(\Sigma)$ such that  $L\varphi+\lambda\varphi=0.$
\end{center}

We start by proving a sharp upper bound involving $\lambda_1(L)$ for the area of a stable minimal two-sphere $\Sigma$ on a three-manifold $(M,g)$ with $R\geqslant2$. In case $\lambda_1(L)=0$, it is precisely the area bound that appear in \cite{BBN}. In case $\lambda_1(\Sigma)>0$, the area bound below is achieved on stricly minimal slices in the deSitter-Schwarzschild manifold $(\mathbb{R}\times\mathbb{S}^2,g_a)$.

%%%%%%%%%%%%%%% PROPOSITION %%%%%%%%%%%%%%%%%%%%%%%

\begin{proposition}\label{upperbound}
Let $(M,g)$ be a Riemannian three-manifold with scalar curvature $R\geqslant 2$. If $\Sigma\subset M$ is a stable minimal two-sphere, then 
\begin{equation}\label{eq:area}
|\Sigma| \leqslant \frac{4\pi}{\lambda_1(L)+1}.
\end{equation}
\end{proposition}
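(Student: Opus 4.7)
The plan is to test stability against the constant function $\varphi\equiv 1$ and then convert the pointwise quantity $\Ric(\nu,\nu)+|A|^2$ into a combination of scalar curvature, Gauss curvature, and traceless second fundamental form via the Gauss equation, finally invoking Gauss--Bonnet on $\Sigma=\Sph^2$.

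More concretely, by the variational characterization associated to the convention $L\varphi+\lambda\varphi=0$, we have
\[
\lambda_1(L)=\inf_{\varphi\not\equiv 0}\frac{\int_{\Sigma}|\D\varphi|^2-(\Ric(\nu,\nu)+|A|^2)\varphi^2\,\dsigma}{\int_\Sigma\varphi^2\,\dsigma}.
\]
Plugging in $\varphi\equiv 1$ (which is admissible since $\Sigma$ is closed) yields
\[
\lambda_1(L)\,|\Sigma|+\int_\Sigma\bigl(\Ric(\nu,\nu)+|A|^2\bigr)\,\dsigma\leqslant 0.
\]
This is where strict stability is not needed---only $\lambda_1(L)\geqslant 0$ would be the usual stability condition, but the inequality above holds regardless of sign.

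Next I would apply the Gauss equation together with $H=0$:
\[
2K_\Sigma=R-2\Ric(\nu,\nu)-|A|^2,
\]
which rearranges to
\[
\Ric(\nu,\nu)+|A|^2=\frac{R}{2}-K_\Sigma+\frac{|A|^2}{2}.
\]
Integrating and combining with the previous display gives
\[
\lambda_1(L)\,|\Sigma|+\frac{1}{2}\int_\Sigma R\,\dsigma-\int_\Sigma K_\Sigma\,\dsigma+\frac{1}{2}\int_\Sigma|A|^2\,\dsigma\leqslant 0.
\]
Now using $R\geqslant 2$ and the Gauss--Bonnet theorem $\int_\Sigma K_\Sigma\,\dsigma=4\pi$ (since $\Sigma\cong\Sph^2$), and discarding the nonnegative term $\tfrac{1}{2}\int_\Sigma |A|^2\,\dsigma$, one obtains
\[
(1+\lambda_1(L))\,|\Sigma|\leqslant 4\pi,
\]
which is exactly \eqref{eq:area}.

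There is no real obstacle here: the argument is a one-shot test function computation. The only thing to be careful about is the sign convention for $\lambda_1(L)$ chosen in the paper (so that the Rayleigh quotient above is the correct one). I would also remark in passing that equality throughout forces $R\equiv 2$ on $\Sigma$ and $|A|\equiv 0$, i.e., $\Sigma$ is totally geodesic with ambient scalar curvature attaining the bound along it---this is precisely the infinitesimal rigidity that will be exploited in the proof of Theorem \ref{theorem2}.
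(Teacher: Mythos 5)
Your proof is correct and takes essentially the same route as the paper's: you test the stability (Rayleigh quotient) inequality with $\varphi\equiv 1$, substitute the Gauss equation $\Ric(\nu,\nu)=\frac{R}{2}-K_\Sigma-\frac{|A|^2}{2}$ for the minimal $\Sigma$, and conclude via Gauss--Bonnet, $R\geqslant 2$, and discarding the $|A|^2$ term. Your closing remark on the equality case (forcing $A\equiv 0$ and $R\equiv 2$ along $\Sigma$) is likewise exactly the content of Corollary \ref{cor}.
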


\begin{proof}
By the stability inequality we have that 
\[
\lambda_1(L)\int_\Sigma\varphi^2\,\dsigma+\int_\Sigma(\Ric(\nu,\nu)+|A|^2)\varphi^2\,\dsigma\leqslant\int_\Sigma|\D_\Sigma\varphi|^2\,\dsigma
\]
for all $\varphi\in C^{\infty}(\Sigma)$, where $\dsigma$ denotes the area element of $\Sigma$, and  $\lambda_1(L)\geqslant 0$.
Choosing $\varphi=1$, we get
\begin{eqnarray}\label{ineq1}
\lambda_1(L)|\Sigma|+\int_\Sigma(\Ric(\nu,\nu)+|A|^2)\,\dsigma\leqslant 0,\label{Des 1}
\end{eqnarray}
where $|\Sigma|$ is the area of $\Sigma$. The Gauss equation implies
\begin{eqnarray}
\Ric(\nu,\nu)=\dfrac{R}{2}-K_\Sigma -\dfrac{|A|^2}{2},\label{Eq 1}
\end{eqnarray}
where $K_\Sigma$ is the Gauss curvature of $\Sigma$. Substituting \eqref{Eq 1} in \eqref{Des 1}:\begin{eqnarray}\label{ineq2}
\lambda_1(L)|\Sigma|+\dfrac{1}{2}\int_\Sigma(R+|A|^2)\,\dsigma\leqslant\int_\Sigma K_\Sigma\,\dsigma=4\pi,
\end{eqnarray}
and using in \eqref{ineq2} that $R\geqslant 2$, we finally obtain
\begin{equation*}\label{ineqarea1}
|\Sigma|\leqslant \dfrac{4\pi}{\lambda_1(L)+1}.
\end{equation*}
\end{proof}

As a corollary of the proof above, we have that if the upper area bound is achieved then we get an infinitesimal rigidity over $\Sigma$.

\begin{corollary}\label{cor}
If we have equality in the above proposition, then on $\Sigma$ we must have $A=0$, $R=2$, $\Ric(\nu,\nu)=-\lambda_1(L)$, $K_\Sigma={4\pi}/{|\Sigma|}$ and $\text{\rm Ker}(L+\lambda_1(L))$ are the constant functions. 
\end{corollary}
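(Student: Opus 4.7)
The plan is to trace through the chain of inequalities used to prove Proposition \ref{upperbound} and identify exactly which pointwise identities are forced when the upper area bound is attained. After rearrangement, the proof of Proposition \ref{upperbound} amounts to the single inequality
\[
(\lambda_1(L)+1)|\Sigma| + \tfrac{1}{2}\int_\Sigma|A|^2\,\dsigma + \tfrac{1}{2}\int_\Sigma(R-2)\,\dsigma \leqslant 4\pi,
\]
which was obtained from just two genuine inequalities: (a) the Rayleigh-quotient characterization of $\lambda_1(L)$ applied to the constant test function $\varphi\equiv 1$, yielding $\lambda_1(L)|\Sigma|+\int_\Sigma(\Ric(\nu,\nu)+|A|^2)\,\dsigma \leqslant 0$; and (b) the pointwise hypothesis $R\geqslant 2$. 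The Gauss equation and Gauss--Bonnet $\int_\Sigma K_\Sigma\,\dsigma = 4\pi$ that intervene are equalities and play no role in saturation.

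If $|\Sigma|=4\pi/(\lambda_1(L)+1)$ then both (a) and (b) must saturate. Saturation of (b) in the rearranged form above forces $\int_\Sigma|A|^2\,\dsigma = 0$ and $\int_\Sigma(R-2)\,\dsigma = 0$; since both integrands are nonnegative, this yields $A\equiv 0$ and $R\equiv 2$ pointwise on $\Sigma$. Saturation of (a) in the Rayleigh characterization of $\lambda_1(L)$ with test function $\varphi\equiv 1$ forces $1$ to be a first eigenfunction of $L$, i.e.\ $L(1)+\lambda_1(L)=0$, which upon expanding gives $\Ric(\nu,\nu)+|A|^2 \equiv -\lambda_1(L)$ pointwise. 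Combined with $A\equiv 0$, this is precisely $\Ric(\nu,\nu)\equiv -\lambda_1(L)$. Plugging $A=0$ and $R=2$ into the Gauss identity $\Ric(\nu,\nu)=R/2 - K_\Sigma - |A|^2/2$ then gives $K_\Sigma \equiv 1+\lambda_1(L)$, which in view of the equality $(1+\lambda_1(L))|\Sigma|=4\pi$ reads $K_\Sigma \equiv 4\pi/|\Sigma|$.

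It remains to show that $\text{Ker}(L+\lambda_1(L))$ consists only of constants. Having just verified that $\varphi\equiv 1$ is a strictly positive eigenfunction of $L$ for the eigenvalue $\lambda_1(L)$, we may invoke the standard fact that the lowest eigenvalue of a self-adjoint Schr\"odinger-type operator on a closed manifold is simple whenever it admits a strictly positive eigenfunction (a consequence of the strong maximum principle, or equivalently of Courant's nodal domain theorem). Hence $\dim\text{Ker}(L+\lambda_1(L))=1$, and since the constants lie in the kernel, $\text{Ker}(L+\lambda_1(L))=\mathbb{R}\cdot 1$. There is no serious obstacle in this argument: it is essentially bookkeeping on which inequalities became equalities, the only external input being the well-known simplicity of $\lambda_1$ in the presence of a positive first eigenfunction.
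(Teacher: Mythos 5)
Your proof is correct and follows exactly the route the paper intends: the paper states Corollary \ref{cor} as an immediate consequence of tracing equality through the proof of Proposition \ref{upperbound}, and your bookkeeping (equality in the stability inequality with $\varphi\equiv 1$ forcing $L(1)=-\lambda_1(L)$, vanishing of $\int_\Sigma|A|^2\,\dsigma$ and $\int_\Sigma(R-2)\,\dsigma$, then the Gauss equation giving $K_\Sigma$) is precisely that argument. Your appeal to the standard simplicity of the first eigenvalue in the presence of the positive eigenfunction $\varphi\equiv 1$ correctly supplies the one fact the paper leaves unstated, namely that $\text{\rm Ker}(L+\lambda_1(L))$ consists exactly of the constants.
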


Our next proposition gives a relation between strict stability and the Haw\-king mass. More precisely, it tells us that if the second variation of the Hawking mass of a stricly stable minimal two-sphere $\Sigma$ is non-positive for all smooth normal variations $\Sigma(t)$ of $\Sigma$, then we get the reverse inequality in \eqref{upperbound}. We therefore get equality in \eqref{upperbound} and the conclusions of Corollary \ref{cor} follows in this case.

 Recall that, by definition, $\Sigma$ is stricly stable when  $\lambda_1(L)>0.$

%%%%%%%%%%%%% PROPOSITION %%%%%%%%%%%%

\begin{proposition}\label{lowerbound}
Let $(M,g)$ be a Riemannian three-manifold with scalar curvature $R\geqslant 2$ and let $\Sigma\subset M$ be a minimal two-sphere. If $\Sigma$ is strictly stable and the second variation of the Hawking mass of $\Sigma$ is non-positive, then
\begin{equation}\label{eq:mass}
|\Sigma| \geqslant \frac{4\pi}{\lambda_1(L)+1}.
\end{equation}
\end{proposition}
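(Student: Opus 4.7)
The plan is to test the second-variation inequality at $\Sigma$ against a distinguished normal variation, namely the one generated by a first eigenfunction $\psi$ of the Jacobi operator, $L\psi=-\lambda_1(L)\psi$. Since $\lambda_1(L)>0$ by strict stability, such a $\psi$ exists and may be chosen positive. This choice is natural because the eigenvalue equation collapses both quadratic integrals appearing in $m_\text{\rm H}''(0)$ into scalar multiples of $\int_\Sigma\psi^2\,\dsigma$, turning $m_\text{\rm H}''(0)\leqslant 0$ into a purely numerical statement relating $|\Sigma|$ and $\lambda_1(L)$.

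First I would write $m_\text{\rm H}(\Sigma(t))=(16\pi)^{-1/2}A(t)^{1/2}G(t)$, with $A(t)=|\Sigma(t)|$ and $G(t)=1-\frac{1}{16\pi}\int_{\Sigma(t)}H^2\,\dsigma-\frac{\Lambda}{24\pi}A(t)$. Minimality of $\Sigma$ forces $H\equiv 0$, so $A'(0)=-\int_\Sigma H\psi\,\dsigma=0$ and the first derivative of $\int_{\Sigma(t)}H^2\,\dsigma$ also vanishes at $t=0$; all cross terms in the product rule thus drop, leaving
\begin{equation*}
m_\text{\rm H}''(0)=\frac{1}{\sqrt{16\pi}}\left(\frac{A''(0)}{2|\Sigma|^{1/2}}G(0)+|\Sigma|^{1/2}G''(0)\right).
\end{equation*}
Next I would insert the standard identities $A''(0)=-\int_\Sigma\psi L\psi\,\dsigma$ and $\ddtzero\int_{\Sigma(t)}H^2\,\dsigma=2\int_\Sigma(L\psi)^2\,\dsigma$, the latter following from $H(0)=0$ together with the first-variation formula $H'(0)=-L\psi$. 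This presents $m_\text{\rm H}''(0)$ as an explicit linear combination of $\int_\Sigma\psi L\psi\,\dsigma$ and $\int_\Sigma(L\psi)^2\,\dsigma$.

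Finally, substituting $L\psi=-\lambda_1\psi$ reduces both integrals to scalars times $\int_\Sigma\psi^2\,\dsigma$, and combining coefficients (with $\Lambda=2$) produces exactly the factor $(1+\lambda_1)/(4\pi)$, so that the hypothesis $m_\text{\rm H}''(0)\leqslant 0$ becomes $1/|\Sigma|\leqslant(1+\lambda_1)/(4\pi)$, i.e.\ $|\Sigma|\geqslant 4\pi/(1+\lambda_1)$. The only mildly delicate point is the arithmetic bookkeeping among the various $\pi$-coefficients; conceptually the proof is short, being driven by three observations — strict stability supplies the positive eigenfunction, minimality annihilates the first-order contributions, and the eigenvalue equation reduces both quadratic functionals of $\psi$ to a common factor of $\int_\Sigma\psi^2\,\dsigma$ that cancels out of the final inequality.
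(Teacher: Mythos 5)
Your proposal is correct and follows essentially the same route as the paper: both compute the second variation of the Hawking mass at the minimal surface (where minimality kills all first-order cross terms, leaving only $-\int_\Sigma \varphi L\varphi\,\dsigma$ and $\int_\Sigma (L\varphi)^2\,\dsigma$ contributions) and then test with a first eigenfunction, using $\lambda_1(L)>0$ to divide out one factor of $\lambda_1$ and obtain $|\Sigma|\geqslant 4\pi/(\lambda_1(L)+1)$. The only cosmetic differences are that the paper first records the resulting inequality for an arbitrary variation function $\varphi$ before specializing to the eigenfunction, and that your sign $H'(0)=-L\psi$ differs from the paper's convention $\left.\dt H_t\right|_{t=0}=L\varphi$, which is immaterial since only $(H'(0))^2$ and terms multiplied by $H(0)=0$ enter.
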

\begin{proof}
Let $\Sigma(t)\subset M$ be a smooth normal variation of $\Sigma$ given by a vector field $X=\varphi\nu$, where $\varphi\in C^{\infty}(\Sigma)$. Since $\Sigma$ is a minimal surface, a direct computation gives
\begin{eqnarray*}
\left.\ddt \right|_{t=0}m_\text{\rm H}(\Sigma(t))&=&-\dfrac{1}{128\,\pi^{3/2}|\Sigma|^{1/2}}\int_\Sigma\varphi L\varphi\,\dsigma\pl16\pi-\dfrac{4}{3}|\Sigma|\pr\\
&+&\dfrac{|\Sigma|^{1/2}}{64\,\pi^{3/2}}\pl-2\int_\Sigma(L\varphi)^2\,\dsigma+\dfrac{4}{3}\int_{\Sigma}\varphi L\varphi\,\dsigma\pr,
\end{eqnarray*}
and, because $\left.\ddt\right|_{t=0}m_\text{\rm H}(\Sigma(t))\leqslant 0$, we get that
\begin{equation}\label{desigualdade3}
(8\pi-2|\Sigma|)\pl-\int_\Sigma\varphi L\varphi\,\dsigma\pr\leqslant2|\Sigma|\int_\Sigma(L\varphi)^2\,\dsigma.
\end{equation}

Furthermore, if we apply in \eqref{desigualdade3} an eigenfunction of $\lambda_1(L)$ satisfying $\int_\Sigma\varphi^2\,\dsigma=1$, we obtain
\[
(8\pi-2|\Sigma|)\lambda_1(L)\leqslant2|\Sigma|\lambda_1(L)^2,
\]
and, since $\lambda_1(L)>0$, this in turn imply
\[
(8\pi-2|\Sigma|)\leqslant2|\Sigma|\lambda_1(L),
\]
and the result follows.
\end{proof}

%As an immediate corollary of the above, we have:

%%%%%%%%%%%% COROLLARY %%%%%%%%%%%%%%%%%%%%%%%%%%%

\begin{remark}
When a minimal two-sphere is stable but not strictly so, i.e.,  in case $\lambda_1(L)= 0$, one cannot use the hypothesis  of Proposition \ref{lowerbound} to conclude the infinitesimal rigidity of Corollary \ref{cor}. In this case, the correct assumption to make in order to have rigidity is the one made in \cite{BBN}, that is, to bypass Proposition \ref{lowerbound} and assume directly that $\Sigma$ is an area-minimizing two-sphere satisfying $|\Sigma|=4\pi$, and in this case $\Sigma$ is in fact a global maximum of the Hawking mass.
\end{remark}

%%%%%%%%%%%%%%% PROOF OF THE THEOREM 2 %%%%%%%%%%
\section{Proof of Theorem \ref{theorem2}}

Let $(M,g)$ be a Riemannian three-manifold and consider a two-sided compact surface $\Sigma\subset M$. If $\Sigma$ is a strictly stable minimal surface we can always use the implicit function theorem to find a smooth function
$w:(-\epsilon,\epsilon)\times\Sigma\longrightarrow\mathbb{R}$ with $w(0,x)=0$, $\forall\, x\in\Sigma$, such that the surfaces
\[\Sigma(t)=\{\exp_x(w(t,x)\nu(x)):x\in\Sigma\},\,t\in(-\epsilon,\epsilon),
\] have constant mean curvature, where $\nu$ is the unit normal vector field along $\Sigma$ and $\exp$ is the exponential map of $M$. But if we do not have any other information on $\Sigma$, we cannot conclude that the one-parameter family $\Sigma(t)$ of surfaces defined above gives a foliation of a neighborhood of $\Sigma$ in $M$ because $\frac{\partial w }{\partial t}(0,\cdot)$ may change sign.

Now suppose that $(M,g)$ has scalar curvature  $R\geqslant 2$ and that $\Sigma\subset M$ is an embedded strictly stable minimal two-sphere. In adittion, suppose that the second variation at $t=0$ of the Hawking mass of all smooth normal variations  $\Sigma(t)$ of $\Sigma$ is non-positive. Then, in this case, from propositions \ref{upperbound} and \ref{lowerbound}, we have the infinitesimal rigidity, i.e., 
\[
|\Sigma|=\dfrac{4\pi}{\lambda_1(L)+1},
\]
and the conclusions of Corollary \ref{cor} holds.  It will follow from this that we can construct a one-parameter family $\Sigma(t)$ as described above, with the function $w$ satisfying $\frac{\partial w}{\partial t}(0,\cdot)=1$, and the family $\Sigma(t)$ defined using this function $w$ giving a foliation of a neighborhood of $\Sigma$ by CMC embedded two-spheres. This is proved in the next proposition.

\begin{proposition}\label{cmcfoliation}
Let $(M,g)$ be a Riemannian three-manifold with scalar curvature  $R\geqslant 2$. If $\Sigma\subset M$ is an embedded stable minimal two-sphere such that
\[
|\Sigma|=\dfrac{4\pi}{\lambda_1(L)+1},
\]
then there exist $\epsilon>0$ and a smooth function $w:(-\epsilon,\epsilon)\times\Sigma\longrightarrow\mathbb{R}$ satisfying the following conditions:
\begin{itemize}
\item For each $t\in(-\epsilon,\epsilon)$, $\Sigma(t)=\{\exp_x(w(t,x)\nu(x)):x\in\Sigma\}$ is an embedded two-sphere with constant mean curvature.
\item $w(0,x)=0$, $\dfrac{\partial w}{\partial t}(0,x)=1$ and $\int_\Sigma(w(t,\cdot)-t)\,\dsigma=0.$
\end{itemize}
\end{proposition}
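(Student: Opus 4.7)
The plan is to apply the implicit function theorem to construct $w$ as a perturbation of the function $(t,x)\mapsto t$, solving simultaneously for constant mean curvature of the normal graph and the normalization $\int_\Sigma (w(t,\cdot)-t)\,\dsigma=0$. Write $w(t,x)=t+v(t,x)$, and for $(t,u)$ in a neighborhood of $(0,0)$ in $\R\times E$, where
$$ E=\bigl\{u\in C^{2,\alpha}(\Sigma):\textstyle\int_\Sigma u\,\dsigma=0\bigr\}, $$
define the normal graph $\Sigma_{t,u}=\{\exp_x((t+u(x))\nu(x)):x\in\Sigma\}$ and the map
$$\Phi(t,u)=H(\Sigma_{t,u})-\overline{H(\Sigma_{t,u})}\,\in\,F:=\bigl\{f\in C^{0,\alpha}(\Sigma):\textstyle\int_\Sigma f\,\dsigma=0\bigr\},$$
where $\overline{\,\cdot\,}$ denotes the average with respect to $\dsigma$. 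Note $\Phi(0,0)=0$ since $\Sigma$ is minimal.

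Next I compute the partial derivative $D_u\Phi(0,0):E\to F$. A standard computation gives $D_u\Phi(0,0)\varphi=L\varphi-\overline{L\varphi}$. The crucial step is showing this is an isomorphism, which is where the hypothesis $|\Sigma|=4\pi/(\lambda_1(L)+1)$ enters in an essential way. By Corollary \ref{cor}, on $\Sigma$ we have $A\equiv 0$ and $\Ric(\nu,\nu)=-\lambda_1(L)$, so $L=\Delta_\Sigma-\lambda_1(L)$. In particular $\int_\Sigma L\varphi\,\dsigma=-\lambda_1(L)\int_\Sigma\varphi\,\dsigma=0$ for $\varphi\in E$, so $D_u\Phi(0,0)\varphi=L\varphi$. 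Since $\Sigma$ is a two-sphere of constant Gauss curvature $K=4\pi/|\Sigma|=\lambda_1(L)+1$, the first nonzero eigenvalue of $-\Delta_\Sigma$ equals $2K=2(\lambda_1(L)+1)$, hence on $E$ the operator $L=\Delta_\Sigma-\lambda_1(L)$ has spectrum bounded above by $-(2K+\lambda_1(L))=-(3\lambda_1(L)+2)<0$ and is therefore invertible $E\to F$ by standard elliptic theory.

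The implicit function theorem then produces a smooth $v:(-\epsilon,\epsilon)\to E$ with $v(0)=0$ and $\Phi(t,v(t))\equiv 0$, so $\Sigma(t):=\Sigma_{t,v(t)}$ has constant mean curvature and $\int_\Sigma v(t,\cdot)\,\dsigma=0$. Setting $w(t,x)=t+v(t,x)$ gives $w(0,\cdot)=0$ and $\int_\Sigma(w(t,\cdot)-t)\,\dsigma=0$. To get $\partial_t w(0,\cdot)=1$, differentiate $\Phi(t,v(t))=0$ at $t=0$: using $\partial_t\Phi(0,0)=L(1)-\overline{L(1)}=-\lambda_1(L)-(-\lambda_1(L))=0$, we obtain $D_u\Phi(0,0)\bigl(\partial_t v(0)\bigr)=0$, whence $\partial_t v(0)=0$ by injectivity.

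Finally, since $\partial_t w(0,\cdot)=1>0$ on the compact surface $\Sigma$, after possibly shrinking $\epsilon$ the map $(t,x)\mapsto\exp_x(w(t,x)\nu(x))$ is a diffeomorphism from $(-\epsilon,\epsilon)\times\Sigma$ onto a neighborhood of $\Sigma$ in $M$; in particular each $\Sigma(t)$ is embedded. The main obstacle is the invertibility of $D_u\Phi(0,0)$ on mean-zero functions, which is exactly where the rigidity identity $K_\Sigma=\lambda_1(L)+1$ from Corollary \ref{cor} is indispensable to ensure the first nonzero Laplace eigenvalue dominates $\lambda_1(L)$.
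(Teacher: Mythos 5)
Your proof is correct and follows essentially the same route as the paper's: both apply the implicit function theorem to the map $(t,u)\mapsto H_{\Sigma_{t+u}}-\overline{H_{\Sigma_{t+u}}}$ on zero-average H\"older spaces, use Corollary \ref{cor} to identify $L=\Delta_\Sigma-\lambda_1(L)$ and hence compute the linearization as $Lv$, and then deduce $\frac{\partial w}{\partial t}(0,\cdot)=1$. The only deviations are refinements of detail: you justify invertibility of $L$ on mean-zero functions via the explicit spectral gap $2K_\Sigma=2(\lambda_1(L)+1)>\lambda_1(L)$ of the round sphere (the paper defers this to \cite{N}), and you get $\frac{\partial w}{\partial t}(0,\cdot)=1$ by differentiating the implicit equation and using injectivity on the zero-average subspace, rather than the paper's identity $L\left(\frac{\partial w}{\partial t}(0,\cdot)\right)=L(1)$ combined with injectivity of $L$ from strict stability --- a small advantage of your version is that it also covers the borderline case $\lambda_1(L)=0$ allowed by the proposition's hypothesis of mere stability.
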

\begin{proof}
The proof follows along the same lines as the proof of Proposition 2 in \cite{N}. We use the same notations used there. 

We consider the map $\Psi:(-\epsilon,\epsilon)\times B(0,\delta)\longrightarrow Y$ defined by
\[
\Psi(t,u)=H_{\Sigma_{u+t}}-\dfrac{1}{|\Sigma|}\int_\Sigma H_{\Sigma_{u+t}}\,\dsigma,
\]
and we notice that $\Psi(0,0)$, because $\Sigma_0=\Sigma$. By Corollary \ref{cor}, we have that the Jacobi operator of $\Sigma$ is given by
\[
L=\Delta_\Sigma-\lambda_1(L).
\]
Thus, obtain for $v\in X$ that
\begin{align*}
D\Psi(0,0)\cdot v&=\left.\dfrac{\textrm{d}}{\textrm{d}s}\right|_{s=0}\Psi(0,s)\\
&=\left.\dfrac{\textrm{d}}{\textrm{d}s}\right|_{s=0}\pl H_{\Sigma_{sv}}-\dfrac{1}{|\Sigma|}\int_\Sigma H_{sv}\,\dsigma\pr\\
&=Lv+\dfrac{\lambda_1(L)}{|\Sigma|}\int_\Sigma v\,\dsigma\\
&=Lv,
\end{align*}
and since $L:X\longrightarrow Y$ is a linear isomorphism, we can use the implicit function theorem to find the function $w:(-\epsilon,\epsilon)\times\Sigma\longrightarrow \mathbb{R}$ as in \cite{N}.

Moreover, it is easy to see that $w$ satisfies $w(0,\cdot)=0$ and $\int_\Sigma(w(t,\cdot)-t)\,\dsigma=0$, and that the latter implies
\[
\int_\Sigma\dfrac{\partial w}{\partial t}(0,\cdot)\,\dsigma=|\Sigma|.
\] 
Furthermore, since $H_{\Sigma_{w(t\cdot)}}=\frac{1}{|\Sigma|}\int_\Sigma H_{\Sigma_{w(t,\cdot)}}\,\dsigma$, $\forall t\in(-\epsilon,\epsilon)$, we have after differentiating at $t=0$ that
\begin{align*}
L\pl\dfrac{\partial w}{\partial t}(0,\cdot)\pr&=\dfrac{1}{|\Sigma|}\int_\Sigma L\pl\dfrac{\partial w}{\partial t}(0,\cdot)\pr\,\dsigma\\
&=-\dfrac{\lambda_1(L)}{|\Sigma|}\int_\Sigma\dfrac{\partial w}{\partial t}(0,\cdot)\,\dsigma\\
&=-\lambda_1(L)\\
&=L(1),
\end{align*}
and we thus conclude that $\frac{\partial w}{\partial t}(0,\cdot)=1$, for the strict stability of $\Sigma$ implies that $L$ is injective.
\end{proof}

We are now interested in properties of the CMC foliation constructed above. We will say that a CMC surface $\Sigma$ in a three-manifold $(M,g)$ is \textit{weakly stable} if
\[
\int_\Sigma|\D_{\Sigma}\varphi|^2-(\Ric(\nu,\nu)+|A_\Sigma|^2)\varphi^2\,\dsigma\geqslant0,
\]
for all $\varphi\in C^{\infty}(\Sigma)$ such that $\int_\Sigma\varphi\,\dsigma=0$. Inspired by Lemma 3.3 of \cite{BBN}, we next prove that, decreasing $\epsilon$
if necessary, all surfaces $\Sigma(t)$ in the foliation of Proposition \ref{cmcfoliation} are weakly stable.

\begin{lemma}\label{weakstability}
Consider $(M,g)$, $\Sigma$ and $\Sigma(t)$ as in Proposition \ref{cmcfoliation}. Then, there exists  $0<\delta<\epsilon$ such that:  if  $t\in(-\delta,\delta)$ and  $u$ is a function on the two-sphere with $\int_{\Sigma(t)}u\,\dsigma_t=0$, then
$$
\int_{\Sigma(t)}|\nabla_{\Sigma(t)}u|^2\,\dsigma_t-\int_{\Sigma(t)}(\Ric(\nu_t,\nu_t)+|A_{\Sigma(t)}|^2)u^2\,\dsigma_t\geqslant \lambda_1(L_\Sigma)\int_{\Sigma(t)}u^2\,\dsigma_t,
$$
where $\nu_t$ is the unit normal vector field along $\Sigma(t)$ with $\nu_0=\nu$.
\end{lemma}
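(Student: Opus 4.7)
The plan is to argue by contradiction: assume the inequality fails along a sequence $t_n\to 0$, pull the offending functions back to $\Sigma$, and extract a limit via Rellich--Kondrachov compactness, thereby contradicting the infinitesimal rigidity at $t=0$.

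First I would record what the inequality reduces to at $t=0$. By Corollary \ref{cor}, the rigidity along $\Sigma$ gives $A_\Sigma\equiv 0$ and $\Ric(\nu,\nu)=-\lambda_1(L_\Sigma)$, so for any $u\in C^\infty(\Sigma)$,
\begin{equation*}
\int_\Sigma|\D_\Sigma u|^2\,\dsigma-\int_\Sigma(\Ric(\nu,\nu)+|A_\Sigma|^2)u^2\,\dsigma \;=\; \int_\Sigma|\D_\Sigma u|^2\,\dsigma+\lambda_1(L_\Sigma)\int_\Sigma u^2\,\dsigma,
\end{equation*}
which strictly exceeds $\lambda_1(L_\Sigma)\int_\Sigma u^2\,\dsigma$ unless $u$ is constant. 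In particular the desired inequality holds at $t=0$ and, on the zero-mean subspace, with strict inequality for every nonzero $u$. Next, set $F_t:\Sigma\to \Sigma(t)$, $F_t(x)=\exp_x(w(t,x)\nu(x))$. Since $w$ is smooth with $w(0,\cdot)=0$, the pulled-back metrics $F_t^*(g|_{\Sigma(t)})$, unit normals, second fundamental forms, volume densities and ambient Ricci data all converge in $C^\infty(\Sigma)$ to the corresponding objects on $\Sigma$ as $t\to 0$. In particular the coefficient $a_t := (\Ric(\nu_t,\nu_t)+|A_{\Sigma(t)}|^2)\circ F_t$ converges uniformly on $\Sigma$ to $-\lambda_1(L_\Sigma)$.

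Now suppose no $\delta>0$ works. Then there exist $t_n\to 0$ and smooth $u_n$ on $\Sigma(t_n)$ with $\int_{\Sigma(t_n)}u_n\,\dsigma_{t_n}=0$, $\int_{\Sigma(t_n)}u_n^2\,\dsigma_{t_n}=1$, and
\begin{equation*}
\int_{\Sigma(t_n)}|\D_{\Sigma(t_n)} u_n|^2\,\dsigma_{t_n}-\int_{\Sigma(t_n)}(\Ric(\nu_{t_n},\nu_{t_n})+|A_{\Sigma(t_n)}|^2)u_n^2\,\dsigma_{t_n}<\lambda_1(L_\Sigma).
\end{equation*}
Pulling back via $F_{t_n}$ to $\tilde u_n := u_n\circ F_{t_n}$ and using the uniform convergence above, $\{\tilde u_n\}$ is bounded in $W^{1,2}(\Sigma)$ (the gradient integral is controlled by the left-hand side plus a uniformly bounded $L^2$ term). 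By Rellich--Kondrachov a subsequence converges weakly in $W^{1,2}(\Sigma)$ and strongly in $L^2(\Sigma)$ to a limit $\tilde u$ with $\int_\Sigma \tilde u\,\dsigma=0$ and $\|\tilde u\|_{L^2(\Sigma)}=1$. Passing to the limit (weak lower semicontinuity of $\int|\D\cdot|^2$, together with strong $L^2$ convergence and uniform convergence of $a_{t_n}$ and of the pulled-back area forms for the zeroth-order term) yields
\begin{equation*}
\int_\Sigma|\D \tilde u|^2\,\dsigma+\lambda_1(L_\Sigma)\int_\Sigma \tilde u^2\,\dsigma\;\leqslant\;\lambda_1(L_\Sigma)\int_\Sigma \tilde u^2\,\dsigma,
\end{equation*}
forcing $\D \tilde u\equiv 0$, i.e.\ $\tilde u$ constant. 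Combined with $\int_\Sigma \tilde u\,\dsigma=0$ this gives $\tilde u\equiv 0$, contradicting $\|\tilde u\|_{L^2(\Sigma)}=1$.

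The main obstacle I expect is bookkeeping in the compactness step: one must check that the $W^{1,2}$ bound, the zero-mean constraint, and the quadratic form itself transfer cleanly between $\Sigma(t_n)$ and $\Sigma$ under the diffeomorphism $F_{t_n}$ (including the discrepancy between $\dsigma_{t_n}$ and $F_{t_n}^*\dsigma_{t_n}$), and that the uniform convergence of $a_{t_n}\to -\lambda_1(L_\Sigma)$ and of the pulled-back metric give a genuine lower semicontinuous quadratic form in the limit. All of these are routine consequences of the smoothness of $w$ established in Proposition \ref{cmcfoliation}, but must be arranged before the contradiction can be closed.
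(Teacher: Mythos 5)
Your proof is correct, but it takes a genuinely different route from the paper's, which is much shorter and direct. Both arguments rest on the same geometric input: by Corollary \ref{cor}, the infinitesimal rigidity at $t=0$ gives $A_\Sigma\equiv 0$ and $\Ric(\nu,\nu)=-\lambda_1(L_\Sigma)$, whence, by smoothness of $w$,
$$
\sup_{\Sigma(t)}\bigl|\Ric(\nu_t,\nu_t)+|A_{\Sigma(t)}|^2+\lambda_1(L_\Sigma)\bigr|\longrightarrow 0\quad\text{as }t\to 0.
$$
The paper then simply invokes a Poincar\'e inequality $\int_{\Sigma(t)}|\nabla_{\Sigma(t)}u|^2\,\dsigma_t\geqslant C\int_{\Sigma(t)}u^2\,\dsigma_t$ for zero-mean $u$, with a single constant $C>0$ uniform over $t\in(-\epsilon,\epsilon)$ (available because the induced metrics vary smoothly in $t$), and chooses $\delta$ so that the displayed supremum is at most $C$; adding the two estimates closes the proof in two lines and gives an explicit criterion for $\delta$. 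You instead argue by contradiction via Rellich--Kondrachov, pulling the offending sequence back to the fixed sphere $\Sigma$: in effect you re-derive, inlined into the lemma, the uniform spectral gap that the paper quotes as a known fact. What your route buys is self-containedness --- you never need to assert uniformity of the Poincar\'e constant, since the compactness argument manufactures it --- at the cost of being non-quantitative (no explicit $\delta$) and of requiring the transfer bookkeeping you flag: comparability of $W^{1,2}$ norms under $F_{t_n}$, uniform convergence of the pulled-back potential and area densities (needed so the zero-mean constraint and the $L^2$ normalization survive the limit), and weak lower semicontinuity of the Dirichlet energy for smoothly converging metrics. You identify all of these correctly and they do go through, so the argument is sound as proposed.
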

\begin{proof}
We start by noting that a uniform constant $C>0$ can be chosen such that the Poincar\'{e} inequality 
\[
\int_{\Sigma(t)}|\nabla_{\Sigma(t)}u|^2\,\dsigma_t \geqslant C  \int_{\Sigma(t)}u^2\,\dsigma_t
\]
holds for each $t\in(-\epsilon,\epsilon)$  and any smooth function function $u:\Sph^2\longrightarrow \R$ such that $\int_{\Sigma(t)}u\,\dsigma_t=0$. In addition, when $t=0$ we know by assumption that $\Sigma(t)$ satisfies the hypothesis of Corollary \ref{cor} and thus
$$\displaystyle \sup_{\Sigma(t)} (\Ric(\nu_t,\nu_t)+|A_{\Sigma(t)}|^2+\lambda_1(L_\Sigma)) \rightarrow 0$$
as $t\rightarrow 0$. These two facts together produce the desired $\delta$.
\end{proof}

Again, let $(M,g)$, $\Sigma$ and $\Sigma(t)$ as in Proposition \ref{cmcfoliation}. We introduce  some notation. Let $f(t,x)=\exp_x(w(t,x)\nu(x))$, $(t,x)\in(-\delta,\delta)\times\Sigma$, where $\delta>0$ is given by Lemma \ref{weakstability}. Consider the \textit{lapse function}
\[
\rho_t(x)=\left\la\dfrac{\partial f}{\partial t}(t,x),\nu_t(x)\right\ra,\,(t,x)\in(-\delta,\delta)\times\Sigma.
\]
Since $\rho_0=1$, we can assume, decreasing $\delta>0$ if necessary, that $\rho_t>0$. Finally, denote by $H_t$ the mean curvature of $\Sigma(t)$ with respect to $\nu_t$ and let $\overline{\rho_t}=\frac{1}{|\Sigma(t)|}\int_{\Sigma(t)}\rho_t\,\dsigma_t.
$

Now, we are in a position to state and prove our next lemma.
\begin{lemma}\label{monotonicity}
For each $t\in(\delta,\delta)$, we have 
\begin{eqnarray*}
\int_{\Sigma(t)}(\Ric(\nu(t),\nu(t))+|A_{\Sigma(t)}|^2)\,\rho_t\,\dsigma_t\geqslant \dfrac{\lambda_1(L_\Sigma)}{\overline{\rho_t}}\int_{\Sigma(t)}(\rho_t-\rhobt)^2\,\dsigma_t \\
+\rhobt\int_{\Sigma(t)}(\Ric(\nu(t),\nu(t))+|A_{\Sigma(t)}|^2)\,\dsigma_t.
\end{eqnarray*}
\end{lemma}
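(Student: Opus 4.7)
The plan is to apply the weak stability inequality of Lemma \ref{weakstability} to the test function $u = \rho_t - \rhobt$, which has zero average on $\Sigma(t)$ by construction. To make this work, I would first recall the Jacobi-type equation satisfied by the lapse function of a CMC foliation: since each leaf $\Sigma(t)$ has constant mean curvature $H(t)$ depending only on $t$, the first variation of mean curvature under a normal variation with speed $\rho_t$ gives
\[
L_{\Sigma(t)}\rho_t \;=\; \Delta_{\Sigma(t)}\rho_t + (\Ric(\nu_t,\nu_t)+|A_{\Sigma(t)}|^2)\rho_t \;=\; -H'(t),
\]
where the right-hand side is constant on $\Sigma(t)$. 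Integrating this identity yields
\[
\int_{\Sigma(t)}(\Ric(\nu_t,\nu_t)+|A_{\Sigma(t)}|^2)\rho_t\,\dsigma_t \;=\; -H'(t)\,|\Sigma(t)|,
\]
which allows us to eliminate the unknown quantity $H'(t)$.

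The key computation is then to multiply the Jacobi equation by $\rho_t$, integrate by parts, and substitute the above identity together with $\int_{\Sigma(t)}\rho_t\,\dsigma_t = \rhobt|\Sigma(t)|$ to arrive at
\[
\int_{\Sigma(t)} |\nabla_{\Sigma(t)} \rho_t|^2\,\dsigma_t \;=\; \int_{\Sigma(t)}(\Ric(\nu_t,\nu_t)+|A_{\Sigma(t)}|^2)\,\rho_t(\rho_t-\rhobt)\,\dsigma_t.
\]
Since $\nabla(\rho_t - \rhobt) = \nabla \rho_t$, this is precisely $\int |\nabla u|^2$. Applying Lemma \ref{weakstability} to $u$ gives
\[
\int_{\Sigma(t)}(\Ric+|A|^2)\rho_t(\rho_t-\rhobt)\,\dsigma_t \;\geqslant\; \int_{\Sigma(t)}(\Ric+|A|^2)(\rho_t-\rhobt)^2\,\dsigma_t + \lambda_1(L_\Sigma)\int_{\Sigma(t)}(\rho_t-\rhobt)^2\,\dsigma_t.
\]

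To finish, I would use the algebraic identity
\[
\rho_t(\rho_t-\rhobt) - (\rho_t-\rhobt)^2 \;=\; \rhobt(\rho_t-\rhobt)
\]
to rewrite the inequality as
\[
\rhobt \int_{\Sigma(t)}(\Ric+|A|^2)(\rho_t - \rhobt)\,\dsigma_t \;\geqslant\; \lambda_1(L_\Sigma)\int_{\Sigma(t)}(\rho_t-\rhobt)^2\,\dsigma_t.
\]
Dividing by $\rhobt>0$ (which was ensured by shrinking $\delta$ after Lemma \ref{weakstability}) and expanding $\int(\Ric+|A|^2)(\rho_t-\rhobt) = \int(\Ric+|A|^2)\rho_t - \rhobt\int(\Ric+|A|^2)$ gives exactly the claimed inequality.

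Conceptually there is no serious obstacle here; the lemma is essentially a packaging of weak stability applied to the lapse, and the only point requiring care is establishing the correct Jacobi equation for $\rho_t$ in a \emph{CMC} (rather than minimal) foliation, so that $H'(t)$ appears as a Lagrange-multiplier-type constant on each leaf and drops out cleanly upon integration.
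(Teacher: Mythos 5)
Your proposal is correct and takes essentially the same approach as the paper: both apply the weak stability inequality of Lemma \ref{weakstability} to the zero-average function $\rho_t-\rhobt$ and use that $\dt H_t=L_{\Sigma(t)}\rho_t$ is constant on each CMC leaf so that the $H'(t)$-term drops out, your intermediate identity $\int_{\Sigma(t)}|\nabla_{\Sigma(t)}\rho_t|^2\,\dsigma_t=\int_{\Sigma(t)}(\Ric(\nu_t,\nu_t)+|A_{\Sigma(t)}|^2)\,\rho_t(\rho_t-\rhobt)\,\dsigma_t$ being an equivalent rearrangement of the paper's direct evaluation of $-\int_{\Sigma(t)}(\rho_t-\rhobt)L_{\Sigma(t)}(\rho_t-\rhobt)\,\dsigma_t$. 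The only discrepancy is the sign in your Jacobi equation ($L_{\Sigma(t)}\rho_t=-H'(t)$, whereas the paper's convention \eqref{derivadadeH} gives $\dt H_t=L_{\Sigma(t)}\rho_t$), which is immaterial since, as you yourself note, $H'(t)$ cancels from the computation either way.
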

\begin{proof}
The result follows from the fact that $\dt H_t =L_{\Sigma(t)} \rho_t$ together with the weak stability inequality of lemma \ref{weakstability}.  In fact,  since $\rho_t-\rhobt$ has zero average on $\Sigma(t)$, we have for each $t\in(-\delta,\delta)$ that 
\begin{align*}
\lambda_1(L_\Sigma)\int_{\Sigma(t)}(\rho_t-\overline{\rho_t})^2\,\dsigma_t&\leqslant-\int_{\Sigma(t)}(\rho_t-\overline{\rho_t})L_{\Sigma(t)}(\rho_t-\overline{\rho_t})\,\dsigma_t\\
&\hspace{-1.0cm}=-\dt H_t\int_{\Sigma(t)}(\rho_t-\overline{\rho_t})\,\dsigma_t+\int_{\Sigma}(\rho_t-\overline{\rho_t})L_{\Sigma(t)}\overline{\rho_t}\,\dsigma_t\\
&\hspace{-0.8cm}=\int_{\Sigma(t)}(\rho_t-\overline{\rho_t})\,(\Ric(\nu_t,\nu_t)+|A_{\Sigma(t)}|^2)\,\overline{\rho_t}\,\dsigma
\end{align*}
and this proves the lemma.
\end{proof}

The proof of theorem \ref{theorem2} is now mostly a matter of putting these facts together.
\begin{proof}[Proof of Theorem \ref{theorem2}]
Let $(M,g)$ and $\Sigma=\mathbb{S}^2\subset M$ satisfying our assumptions. Since $\Sigma$ is a local maximum for the Hawking mass, we have: 
$$
\left.\ddt\right|_{t=0}m_\text{\rm H}(\Sigma(t))\leqslant 0,
$$
for all smooth normal variations $\Sigma(t)$ of $\Sigma$, and by Corollary \ref{cor}:
$$
|\Sigma|=\frac{4\pi}{\lambda_1(L)+1}.
$$

By Proposition \ref{cmcfoliation}, we can construct a CMC foliation of a neighborhood of $\Sigma$ in $M$  by embedded two-spheres $\Sigma(t)\subset M$, with $t\in(-\epsilon,\epsilon)$. 

Noting that 
$$
\left.\dt\right|_{t=0}H_t=L(1)=-\lambda_1(L_\Sigma)<0,
$$
and decreasing $\epsilon>0$ if necessary, we can assume that $H_t<0$ for $t\in(0,\epsilon)$ and that $H_t>0$ for $t\in(-\epsilon,0)$. We can also assume that $m_\text{\rm H}(\Sigma)\geqslant m_\text{\rm H}(\Sigma(t))$ for $t\in(-\epsilon,\epsilon)$ because $\Sigma$ is a local maximum for the Hawking mass.

Now, let $\delta>0$ be given by Lemma \ref{weakstability} so that for each $t\in(-\delta,\delta)$, $\Sigma(t)\subset M$ is a weakly stable CMC two-sphere. In what follows, we will see that this implies, using Lemma \ref{monotonicity}, monotonicity of the Hawking mass along the foliation $\Sigma(t)$. 

In fact, we have
\begin{align*}
\dt m_\text{\rm H}(\Sigma(t))&=-\dfrac{|\Sigma(t)|^{1/2}}{32\pi^{3/2}}H_t\left[\int_{\Sigma(t)}\pl\Ric(\nu_t,\nu_t)+|A_{\Sigma(t)}|^2\pr\rho_t\,\dsigma_t\right.\\
&\left.+4\pi\overline{\rho_t}-\dfrac{3}{4}H_t^2\int_{\Sigma(t)}\rho_t\,\dsigma_t-\int_{\Sigma(t)}\rho_t\,\dsigma_t\right]\\
\phantom{\dt m_\text{\rm H}(\Sigma(t))}&\geqslant-\dfrac{|\Sigma(t)|^{1/2}}{32\pi^{3/2}}H_t\left[\dfrac{\lambda_1(L_\Sigma)}{\overline{\rho_t}}\int_{\Sigma(t)}\pl\rho_t-\overline{\rho_t}\pr^2\,\dsigma_t\right.+\\
&\hspace{-2.3cm}\left.\overline{\rho_t}\int_{\Sigma(t)}\pl\Ric(\nu_t,\nu_t)+|A_{\Sigma(t)}|^2\pr\,\dsigma_t+4\pi\overline{\rho_t}-\dfrac{3}{4}H_t^2\int_{\Sigma(t)}\rho_t\,\dsigma_t-\int_{\Sigma(t)}\rho_t\,\dsigma_t\right],%&=-|\Sigma(t)|^{1/2}H_t\pl\overline{\rho_t}\int_{\Sigma(t)}\pl|A_{\Sigma(t)}|^2-\dfrac{H_t^2}{2}\pr+(R-2)\,\dsigma_t\right.\\
%&\left.+\dfrac{2\lambda_1(L_\Sigma)}{\overline{\rho_t}}\int_{\Sigma(t)}(\rho_t-\overline{\rho_t})^2\,\dsigma_t\pr,
\end{align*}
where the inequality follows by Lemma \ref{monotonicity}, and moreover, using the Gauss equation: 
\begin{align*}
\dt m_\text{\rm H}(\Sigma(t))&\geqslant-\dfrac{|\Sigma(t)|^{1/2}}{32\pi^{3/2}}H_t\left[\frac{\overline{\rho_t}}{2}\int_{\Sigma(t)}\pl|A_{\Sigma(t)}|^2-\dfrac{H_t^2}{2}\pr+(R-2)\,\dsigma_t\right.\\
&\left.+\dfrac{\lambda_1(L_\Sigma)}{\overline{\rho_t}}\int_{\Sigma(t)}\pl\rho_t-\overline{\rho_t}\pr^2\,\dsigma_t\right].
\end{align*} 

Thus, by the formula above, we obtain that $\dt m_\text{\rm H}(\Sigma(t))\geqslant 0$ for $t\in[0,\delta)$ and $\dt m_{H}(\Sigma(t))\leqslant 0$ for $t\in(-\delta,0].$ This implies that
\[
m_\text{\rm H}(\Sigma)\leqslant m_\text{\rm H}(\Sigma(t)),
\]
for all $t\in(-\delta,\delta)$. Since $m_\text{\rm H}(\Sigma)\geqslant m_\text{\rm H}(\Sigma(t))$, we conclude that $m_\text{\rm H}(\Sigma(t))\equiv m_\text{\rm H}(\Sigma)$ and so $\dt m_\text{\rm H}(\Sigma(t))\equiv 0$, and from this, using the formulae above, we have for all $t\in(-\delta,\delta)$ that
\begin{itemize}
\item $\Sigma(t)$ is umbilic;
\item $R=2$ on $\Sigma(t)$;
\item $\rho_t\equiv\overline{\rho_t}$.
\end{itemize}
Moreover, using that $\rho_t\equiv\overline{\rho_t}$, it is not difficult to show that 
$$
w(t,x)=t,\, \forall (t,x)\in(-\delta,\delta)\times\Sigma.
$$ 
Finally, denote by $g_{\Sigma(t)}$ the induced metric on $\Sigma(t)$. Since $\Sigma(t)$ is umbilic and $H_t$ is constant, we have  
$$
\dfrac{\partial}{\partial t}g_{\Sigma(t)}=v(t)g_{\Sigma(t)},\,\forall t\in(-\delta,\delta),
$$ 
where $v$ is a real function. Thus, we get for all $t\in(-\delta,\delta)$ that
\begin{align*}
g_{\Sigma(t)}&=e^{\int_{0}^{t}v(s)\,\textrm{d}s}\,g_\Sigma\\
&=u(t)^2\,g_{\mathbb{S}^2},
\end{align*}
where $u(t)=a\,e^{\int_{0}^{t}v(s)\,\textrm{d}s}$ with $a^2=|\Sigma|/4\pi\in(0,1)$.

Therefore, we conclude that the metric $\overline{g}$ on $(-\delta,\delta)\times\Sigma$ induced by $f(t,x)=\exp_x(t\nu(x))$, $(t,x)\in(-\delta,\delta)\times\Sigma,$ is equal to $dt^2+u(t)^2g_{\mathbb{S}^2}$. Since this metric has scalar curvature equal to 2, we have, by unicity of solutions to \eqref{EDO1}, that $\overline{g}$ is precisely the deSitter-Schwarzschild metric with mass $m_a$ on $(-\delta,\delta)\times\Sigma.$ This finishes the proof.
\end{proof} 

\section{appendix}

%Let $(M^3,g)$ be a three-manifold and consider a compact surface $\Sigma$ in $M$. Our goal in this section is to provide the first and second varia\-tion formulae for the Hawking mass at $\Sigma$. Recalling \eqref{hawkingmass}, we write
%$$
%m_\text{\rm H}(\Sigma)=\frac{|\Sigma|^{1/2}}{(16\pi)^{3/2}}\pl16\pi-\int_\Sigma H^2\,\dsigma-\frac{2\Lambda}{3}|\Sigma|\pr.
%$$

%Suppose $\Sigma$ is two-sided and let $\Sigma(t)$ be a smooth normal variation of $\Sigma$ with variational vector field $X=\varphi\nu$, where $\varphi\in C^{\infty}(\Sigma)$ and $\nu$ denotes unit normal vector field along $\Sigma$. For such a given variation, we have:

Let $(M,g)$ be a three-manifold and consider a two-sided compact surface $\Sigma\subset M$. Our goal in this section is to provide the first and second variation formulae of the Hawking mass at $\Sigma$. Recall that the Hawking mass is defined by
$$
m_\text{\rm H}(\Sigma)=\pl\frac{|\Sigma|}{16\pi}\pr^{1/2}\pl1-\frac{1}{16\pi}\int_\Sigma H^2\,\dsigma-\frac{\Lambda}{24\pi}|\Sigma|\pr,
$$
where $\Lambda=\inf R$.

Choose a unit normal vector field $\nu$ along $\Sigma$ and let $\Sigma(t)\subset M$  be a smooth normal variation of $\Sigma$, that is, $\Sigma(t)=\{f(t,x):x\in\Sigma\}$ where $f:(-\epsilon,\epsilon)\times\Sigma\longrightarrow M$ is a smooth function satisfying:
\begin{itemize}
\item $f_t=f(t,\cdot):\Sigma\longrightarrow M$ is an immersion for each $t\in(-\epsilon,\epsilon)$;
\item $f(0,x)=x$ for each $x\in\Sigma$;
\item $\frac{\partial f}{\partial t}(0,x)=\varphi(x)\nu(x)$ for each $x\in\Sigma$, where $\varphi\in C^\infty(\Sigma).$
\end{itemize}

For such a given variation, we have:

\begin{proposition}[First variation of the Hawking mass]\label{firstvariation} 
\begin{align*}
\left. \dt m_\text{\rm H}(\Sigma(t))\right|_{t=0}&=-\frac{2|\Sigma|^{1/2}}{(16\pi)^{3/2}}\int_\Sigma \varphi\Delta_\Sigma H\,\dsigma\\
&\hspace{-3.0cm}+\frac{|\Sigma|^{1/2}}{(16\pi)^{3/2}}\int_\Sigma\left[2K_\Sigma-\dfrac{8\pi}{|\Sigma|}+\pl\dfrac{1}{2|\Sigma|}\int_\Sigma H^2\,\dsigma-|A|^2\pr\right]H\varphi\,\dsigma\\
&+\frac{|\Sigma|^{1/2}}{(16\pi)^{3/2}}\int_\Sigma(\Lambda-R)H\varphi\,\dsigma.
\end{align*}
\end{proposition}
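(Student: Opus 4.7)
The plan is to write the Hawking mass as a product $m_\text{\rm H} = P\cdot Q$ with
$$P := \pl\frac{|\Sigma|}{16\pi}\pr^{1/2}, \qquad Q := 1 - \frac{1}{16\pi}\int_\Sigma H^2\,\dsigma - \frac{\Lambda}{24\pi}|\Sigma|,$$
apply the product rule $m_\text{\rm H}' = P'Q + PQ'$, and substitute the standard first variation formulas for area and mean curvature. Under the normal variation with velocity $\varphi\nu$, and using the sign convention consistent with Proposition \ref{lowerbound} (in which the Jacobi operator appears without a minus sign in the variation of $H$), these are
$$\dt|\Sigma(t)|\Big|_{t=0} = -\int_\Sigma H\varphi\,\dsigma, \qquad \dt\dsigma_t\Big|_{t=0} = -H\varphi\,\dsigma, \qquad \dt H_t\Big|_{t=0} = L\varphi,$$
where $L = \Delta_\Sigma + |A|^2 + \Ric(\nu,\nu)$ is the Jacobi operator used throughout the paper.

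The key computation is the variation of $\int H^2\,\dsigma$. Combining the three formulas above and integrating by parts,
\begin{align*}
\dt\bigg|_{t=0}\int_{\Sigma(t)} H_t^2\,\dsigma_t &= \int_\Sigma\pl 2HL\varphi - H^3\varphi\pr\dsigma\\
&= \int_\Sigma\pl 2\varphi\Delta_\Sigma H + 2H\varphi(|A|^2 + \Ric(\nu,\nu)) - H^3\varphi\pr\dsigma.
\end{align*}
Substituting the Gauss equation
$$\Ric(\nu,\nu) = \frac{R}{2} - K_\Sigma + \frac{1}{2}(H^2 - |A|^2)$$
produces a cancellation of the two $H^3\varphi$ contributions, yielding
$$\dt\bigg|_{t=0}\int_{\Sigma(t)} H_t^2\,\dsigma_t = \int_\Sigma\pl 2\varphi\Delta_\Sigma H + |A|^2 H\varphi + RH\varphi - 2K_\Sigma H\varphi\pr\dsigma.$$

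With this in hand, $m_\text{\rm H}' = P'Q + PQ'$ is assembled by pulling out the common prefactor $|\Sigma|^{1/2}/(16\pi)^{3/2}$. Writing
$$P' = -\frac{|\Sigma|^{1/2}}{(16\pi)^{3/2}}\cdot\frac{8\pi}{|\Sigma|}\int_\Sigma H\varphi\,\dsigma,$$
the term $P'Q$ converts the three constant pieces of $Q$ into the additive contributions $-8\pi/|\Sigma|$, $(2|\Sigma|)^{-1}\int_\Sigma H^2\,\dsigma$, and $\Lambda/3$ that appear inside the bracket of the target formula. The term $PQ'$ contributes the $\Delta_\Sigma H$, $|A|^2 H\varphi$, $RH\varphi$ and $-2K_\Sigma H\varphi$ pieces from the variation of $\int H^2$, plus an extra $\frac{2\Lambda}{3}\int_\Sigma H\varphi\,\dsigma$ coming from the variation of $|\Sigma|$ inside the $\Lambda$-piece of $Q$. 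The two $\Lambda$ contributions combine via $\frac{1}{3} + \frac{2}{3} = 1$ into the asserted $(\Lambda - R)H\varphi$ factor, and collecting the remaining terms completes the proof.

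The only real obstacle is the bookkeeping: one must consistently apply the sign convention above (otherwise the entire formula flips sign), use the corrected Gauss equation involving the full $\frac{1}{2}(H^2-|A|^2)$ term (not just $-\frac{1}{2}|A|^2$) so that the $H^3\varphi$ terms actually cancel, and correctly track the rescalings $8\pi/|\Sigma|$ from $P'$ and $2\Lambda/3$ from the $\Lambda$-piece of $PQ'$ so that they combine with the constants of $Q$ exactly as required. Beyond integration by parts and the Gauss equation, no deeper ingredient is needed.
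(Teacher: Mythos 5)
Your proposal is correct and takes essentially the same approach as the paper: the paper's proof is exactly a direct product-rule computation from the identities $\left.\dt H_t\right|_{t=0}=L\varphi$, $\left.\dt(\dsigma_t)\right|_{t=0}=-H\varphi\,\dsigma$, and the Gauss equation $2\Ric(\nu,\nu)=R-2K_\Sigma+H^2-|A|^2$, with the same sign conventions you adopt. You have simply carried out explicitly the bookkeeping the paper leaves to the reader, including the cancellation of the $H^3\varphi$ terms and the recombination $\frac{\Lambda}{3}+\frac{2\Lambda}{3}=\Lambda$, all of which checks out.
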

\begin{proof}
This is a direct computation using the first variation formula of area and the following identities:
\begin{itemize}
\item[(i)] $\left.\dt H_{t}\right|_{t=0}=\Delta_\Sigma\varphi+\Ric(\nu,\nu)\varphi+|A|^2\varphi$;
\item[(ii)] $\left.\dt (\dsigma_t)\right|_{t=0}=-\varphi H\dsigma.$ 
\end{itemize}
and the Gauss equation $2\Ric(\nu,\nu)=R-2K_\Sigma+H^2-|A|^2$.  For identities (i) and (ii) see \cite{HP}.

\end{proof}

%For the second variation, we introduce more notation. Let us suppose the normal variation $\Sigma(t)$ is given by a smooth function $f:(-\epsilon,\epsilon)\times\Sigma\longrightarrow M
%$, that is, $\Sigma(t)=\{f(t,x):x\in\Sigma\}$. Denote by $\nu_t$ the unit normal vector field along $\Sigma(t)$ and such that $\nu_0=\nu$ and let $H_t$ be the mean curvature of $\Sigma(t)$ with respect to $\nu_t$. Consider the \textit{lapse function}
%$$
%\rho_t=\left\la\frac{\partial f}{\partial t}(t,\cdot),\nu_t\right\ra.
%$$
%Notice that $\rho_0=\varphi$. Also, It is a well-known fact that
%\begin{equation}\label{derivadadeH}
%\dt H_t=L(t)\rho_t,
%\end{equation}
%where $$L(t)=\Delta_{\Sigma(t)}+\Ric(\nu_t,\nu_t)+|A_{\Sigma(t)}|^2
%$$
% is the Jacobi operator of $\Sigma(t)$. By \eqref{derivadadeH}, the next proposition precisely the second derivative of the mean curvature $H$ along the variation $\Sigma(t)$:

Now, denote by $\nu_t$ the unit normal vector along $\Sigma(t)$ with $\nu_0=\nu$ and let $H_t$ be the mean curvature of $\Sigma(t)$ with respect to $\nu_t$. Consider the \textit{lapse function}
$$
\rho_t(x)=\left\la\frac{\partial f}{\partial t}(t,x),\nu_t(x)\right\ra.
$$
Notice that $\rho_0=\varphi.$ Also, it is a well-known fact that
\begin{equation}\label{derivadadeH}
\dt H_t=L(t)\rho_t,
\end{equation}
where 
$$
L(t)=\Delta_{\Sigma(t)}+\Ric(\nu_t,\nu_t)+|A_{\Sigma(t)}|^2
$$
is the Jacobi operator of $\Sigma(t)$. By $\eqref{derivadadeH}$, we have the second variation formula of the mean curvature  $H_t$ at $t=0$ as a consequence of the next proposition.

\begin{proposition}[First variation of the Jacobi operator]\label{firstvariationofL}
For each function $\psi\in C^\infty(\Sigma)$, we have:
\begin{align*}
L^\prime(0)\,\psi&=2\,\varphi\,\la A,\He\psi\ra+2\,\psi\,\la A,\He\varphi\ra-2\,\varphi\,\omega(\D\psi)-2\,\psi\,\omega(\D\varphi)\\
+&\varphi\,\la\D H,\D\psi\ra-H\,\la\D\varphi,\D\psi\ra+2A(\D\varphi,\D\psi)-\psi\,\di_\Si(\di_\Si\omega)\\
+&\varphi\,\psi\,R_{i\nu\nu j}\,A_{ij}+\varphi\,\psi\,H\Ric(\nu,\nu)+\varphi\,\psi H|A|^2+\varphi\,\psi\,A_{ij}\,A_{ik}\,A_{jk}\\
-&\varphi\,\psi HK_\Sigma,
\end{align*}
where $\omega$ is the 1-form on $\Sigma$ defined by $\omega(X)=\Ric(X,\nu)$.
\end{proposition}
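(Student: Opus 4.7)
The plan is to compute $L'(0)\psi = \partial_t(L(t)\psi)|_{t=0}$ by differentiating the three pieces of $L(t) = \Delta_{\Sigma(t)} + \Ric(\nu_t,\nu_t) + |A_{\Sigma(t)}|^2$ separately, treating $\psi$ as extended to be $t$-independent. The standard infinitesimal identities I rely on are $\partial_t g_{ij}|_{t=0} = -2\varphi A_{ij}$, $\partial_t g^{ij}|_{t=0} = 2\varphi A^{ij}$, $\partial_t\nu_t|_{t=0} = -\D^\Sigma\varphi$, the Riccati-type evolution $\partial_t A_{ij}|_{t=0} = -\He_{ij}\varphi + \varphi A_{ik}A_j{}^k - \varphi R_{i\nu\nu j}$ (up to the paper's sign convention), and the induced variation of the Christoffel symbols on $\Sigma(t)$.

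\textbf{Step 1 (Laplacian).} I differentiate $\Delta_{\Sigma(t)}\psi = g^{ij}(\partial_i\partial_j\psi - \Gamma_{ij}^k\partial_k\psi)$. The metric variation immediately contributes $2\varphi\la A,\He\psi\ra$. For the Christoffel piece, I substitute $\partial_t\Gamma_{ij}^k|_{t=0} = -\D_i(\varphi A_j{}^k) - \D_j(\varphi A_i{}^k) + \D^k(\varphi A_{ij})$ and use the Codazzi identity $\D^i A_{ij} = \D_j H + \omega_j$ to collapse the divergences of $\varphi A$. This produces the terms $\varphi\la\D H,\D\psi\ra$, $-2\varphi\omega(\D\psi)$, $-H\la\D\varphi,\D\psi\ra$, and $2A(\D\varphi,\D\psi)$ of the stated formula.

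\textbf{Step 2 (Ricci and $|A|^2$).} The chain rule applied to $\Ric(\nu_t,\nu_t)$ yields $\partial_t\Ric(\nu_t,\nu_t)|_{t=0} = \varphi(\D_\nu\Ric)(\nu,\nu) - 2\omega(\D\varphi)$, contributing (after multiplication by $\psi$) the term $-2\psi\omega(\D\varphi)$ and a leftover $\varphi\psi(\D_\nu\Ric)(\nu,\nu)$ to be reorganized in Step 3. For $|A|^2 = g^{ij}g^{kl}A_{ik}A_{jl}$, combining the variation of $g^{ij}$ with the Riccati evolution for $A_{ij}$ gives, after a direct expansion, the contributions $2\psi\la A,\He\varphi\ra$, $\varphi\psi A_{ij}A_{ik}A_{jk}$, and $\varphi\psi R_{i\nu\nu j}A_{ij}$ with the correct coefficients.

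\textbf{Step 3 (Bianchi/Gauss assembly) and main obstacle.} To eliminate the leftover $\varphi\psi(\D_\nu\Ric)(\nu,\nu)$, I apply the contracted second Bianchi identity $\D^\alpha\Ric_{\alpha\nu} = \tfrac12\D_\nu R$, which splits the ambient divergence into the normal piece $(\D_\nu\Ric)(\nu,\nu)$ and a tangential piece. The tangential piece is identified, via the Weingarten relation, with $\di_\Si\omega$ plus the corrections $H\Ric(\nu,\nu)$ and $-A^{ij}\Ric(e_i,e_j)$. Expressing $A^{ij}\Ric(e_i,e_j)$ through the Gauss equation $2K_\Si = R - 2\Ric(\nu,\nu) + H^2 - |A|^2$ in terms of $H$, $|A|^2$, and $K_\Si$ then produces the remaining terms $\varphi\psi H\Ric(\nu,\nu)$, $\varphi\psi H|A|^2$, $-\varphi\psi HK_\Si$, and the $-\psi\,\di_\Si(\di_\Si\omega)$ term of the statement. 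The bookkeeping of this last step, including the forced cancellation of the spurious $\D_\nu R$ term by the consistency of the Gauss and Bianchi contractions, is the main technical obstacle; the remainder is a careful but routine use of the standard variational formulas.
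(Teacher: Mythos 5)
Your Steps 1 and 2 are sound and essentially reproduce, modulo the paper's sign conventions, the auxiliary identities the paper itself uses; but your overall route is genuinely different from the paper's. The paper never differentiates the ambient Ricci tensor and never invokes the Bianchi identity: it first rewrites the Jacobi operator via the Gauss equation as
\begin{equation*}
L(t)=\Delta_{\Sigma(t)}+\frac{R}{2}-K_{\Sigma(t)}+\frac{H_t^2}{2}-\frac{|A_{\Sigma(t)}|^2}{2},
\end{equation*}
and then differentiates term by term, quoting the \emph{intrinsic} variation of $K_{\Sigma(t)}$ (Lemma 3.7 of Chow--Knopf applied with $h_{ij}=-2\varphi A_{ij}$), the variation of $\Delta_{\Sigma(t)}$, the Lamm--Metzger--Schulze formula for $\partial_t|A_{\Sigma(t)}|^2$, and $\dt H_t=L(t)\rho_t$. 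In that route the divergence-of-$\omega$ term arises from the traced Codazzi identity inside the variation of the Gauss curvature, whereas in yours it arises from the contracted second Bianchi identity applied to $(\D_\nu\Ric)(\nu,\nu)$. Both decompositions are legitimate, and your Steps 1--2 would assemble correctly.

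The genuine gap is in Step 3, precisely at what you yourself flag as the main obstacle: the term $\tfrac12\varphi\psi\,\D_\nu R$ produced by the Bianchi identity does \emph{not} cancel, and no ``consistency of the Gauss and Bianchi contractions'' can force it to. Nothing else in your computation contains ambient derivatives of curvature: $\di_\Si\omega$ involves only tangential derivatives of $\Ric$, and the Gauss-equation substitution for $A^{ij}\Ric(e_i,e_j)$ is purely algebraic, so there is no candidate term to absorb $\tfrac12\D_\nu R$. You can verify this concretely on a warped product $\textrm{d}r^2+u(r)^2g_{\Sph^2}$ with generic $u$ (so $\D_\nu R\neq0$): taking $\Sigma=\{r_0\}\times\Sph^2$ and $\varphi=\psi=1$, one has $L'(0)\cdot 1=\dr\bigl(\Ric(\nu,\nu)+|A|^2\bigr)$, while the right-hand side of the proposition, evaluated with the paper's conventions ($\omega=0$ on slices), differs from this by exactly $\tfrac12\partial_r R$. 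So your argument, carried out correctly, proves the stated formula \emph{plus} an extra term $\tfrac12\varphi\psi\,\D_\nu R$. In fairness, the proposition as printed omits this term too (in the paper's own route it would appear from differentiating the $R/2$ term along the normal variation, and it is silently dropped; the omission is harmless in all of the paper's applications, since the formula is only used either on the deSitter--Schwarzschild manifold where $R\equiv2$, or multiplied by $H$ on a minimal surface). But a proof cannot assert the cancellation: to close your Step 3 you must either record the extra $\tfrac12\varphi\psi\,\D_\nu R$ term explicitly, or add the hypothesis that $R$ is constant in the normal direction along $\Sigma$.
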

\begin{proof}
Using the Gauss equation, we can rewrite $L(t)$ as 
$$
L(t)=\Delta_{\Sigma(t)}+\frac{R}{2}-K_{\Sigma(t)}+\frac{H_t^2}{2}-\frac{|A_{\Sigma(t)}|^2}{2}.
$$

Now, the formula follows by a straightforward computation using the following identities. The first one is
\begin{align*}
\left.\dt K_{\Sigma(t)}\right|_{t=0}&=-\la A,\He\varphi\ra+H\,\Delta\varphi+2\,\omega(\D\varphi)\\
&+\di_\Si(\di_\Si\omega)\,\varphi+H\,K_{\Sigma}\,\varphi,
\end{align*}
which can be derived directly from Lemma 3.7 of \cite{CK} with $h_{ij}=-2\varphi A_{ij}$.  The other ones are
\begin{align*}
\pl\left.\dt \Delta_{\Sigma(t)}\right|_{t=0}\pr\psi&=2\,\varphi\,\la A,\He\psi\ra+2\,A(\D\varphi,\D\psi)+\varphi\,\la\D H,\D\psi\ra\\
&-H\,\la\D\varphi,\D\psi\ra-2\,\varphi\omega(\D\psi),
\end{align*}
and
\begin{align*}
\left.\dt|A_{\Sigma(t)}|^2\right|_{t=0}=2\,\la A,\He\varphi\ra+2\,R_{i\nu\nu j}\,A_{ij}\,\varphi+2\,A_{ij}\,A_{ik}\,A_{jk}\,\varphi,
\end{align*}
whose proof can be found in detail at \cite{LMS}.

\end{proof}

Next, we have the second variation of the Hawking mass.

\begin{proposition}[Second variation of the Hawking mass]\label{secondvariation}
If $\Sigma\subset M$ is a critical point of the Hawking mass, then
\begin{align*}
\left.\ddt m_\text{\rm H}(\Sigma(t))\right|_{t=0}&=-\frac{2|\Sigma|^{1/2}}{(16\pi)^{3/2}}\int_\Sigma(L\varphi)^2\,\dsigma+\frac{4|\Sigma|^{1/2}}{(16\pi)^{3/2}}\int_\Sigma H^2\varphi L\varphi\,\dsigma\\
&\hspace{-3.5cm}+\frac{m_\text{\rm H}(\Sigma)}{2|\Sigma|}\int_\Sigma|\D\varphi|^2\,\dsigma-\frac{|\Sigma|^{1/2}}{(16\pi)^{3/2}}\int_\Sigma\pl H^2+\frac{2\Lambda}{3}\pr|\D\varphi|^2\,\dsigma\\
&\hspace{-1.5cm}-\dfrac{m_\text{\rm H}(\Sigma)}{2|\Sigma|}\int_\Sigma\pl\Ric(\nu,\nu)+|A|^2-H^2\pr\varphi^2\,\dsigma\\
&\hspace{-2.0cm}+\frac{|\Sigma|^{1/2}}{(16\pi)^{3/2}}\int_\Sigma\pl H^2+\frac{2\Lambda}{3}\pr\pl \Ric(\nu,\nu)+|A|^2-H^2\pr\varphi^2\,\dsigma\\
&\hspace{-2.8cm}-\frac{3\,m_\text{\rm H}(\Sigma)}{2|\Sigma|^2}\pl\int_\Sigma H\varphi\,\dsigma\pr^2-\frac{2|\Sigma|^{1/2}}{(16\pi)^{3/2}}\int_\Sigma HL^\prime(0)\varphi\,\dsigma,
\end{align*}
where $L=L(0)$ and $L^{\prime}(0)$ is given in the proposition above.
\end{proposition}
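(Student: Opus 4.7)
The plan is to differentiate the decomposition
\[
m_\text{\rm H}(\Sigma(t)) = \frac{A(t)^{1/2}}{\sqrt{16\pi}}\,B(t), \qquad B(t) = 1 - \frac{M(t)}{16\pi} - \frac{\Lambda A(t)}{24\pi},
\]
twice at $t=0$, where $A(t) = |\Sigma(t)|$ and $M(t) = \int_{\Sigma(t)} H_t^2\,\dsigma_t$. The product rule gives an expression involving $A'(0), A''(0), M'(0), M''(0)$. The critical-point hypothesis is precisely the identity $(A^{1/2})'(0)\,B(0) + A(0)^{1/2}B'(0) = 0$, which I will use to eliminate the middle cross-term $2(A^{1/2})'(0)B'(0)$ in the expansion of $\ddt m_\text{\rm H}$. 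Combined with $B(0) = \sqrt{16\pi}\,m_\text{\rm H}(\Sigma)/|\Sigma|^{1/2}$, this substitution is what produces the coefficients of the shape $m_\text{\rm H}(\Sigma)/|\Sigma|$ and $m_\text{\rm H}(\Sigma)/|\Sigma|^2$ that appear in the statement.

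To compute the scalar derivatives I would rely on the two standard one-parameter identities $\frac{d(\dsigma_t)}{dt} = -H_t\rho_t\,\dsigma_t$ and $\frac{dH_t}{dt} = L(t)\rho_t$. These immediately yield
\[
A'(0) = -\int_\Sigma H\varphi\,\dsigma, \qquad M'(t) = \int_{\Sigma(t)}\bigl(2H_t\,L(t)\rho_t - H_t^3\rho_t\bigr)\,\dsigma_t,
\]
and by differentiating once more at $t=0$ I obtain $A''(0)$ and $M''(0)$. The computation of $M''(0)$ is exactly where Proposition \ref{firstvariationofL} enters: differentiating $2H_t L(t)\rho_t$ in $t$ brings in a contribution $\int_\Sigma 2 H\,L'(0)\varphi\,\dsigma$, which is the last term displayed in the target formula. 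One also uses that for a normal variation with initial velocity $\varphi\nu$ one may assume $\frac{d\rho_t}{dt}|_{t=0} = 0$ (by reparametrising the variation), so this term does not pollute the formulas.

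Once $A''(0)$ and $M''(0)$ are in hand, the remaining work is algebraic rearrangement. I will apply the Gauss equation $2\Ric(\nu,\nu) = R - 2K_\Sigma + H^2 - |A|^2$ to convert scalar-curvature contributions into the precise combination $\Ric(\nu,\nu) + |A|^2 - H^2$ appearing in the statement, and invoke the Euler--Lagrange equation $\Delta_\Sigma H + QH = 0$ from Proposition \ref{firstvariation} to handle the Laplacian-of-$H$ terms that surface along the way. The squared integral $\bigl(\int_\Sigma H\varphi\,\dsigma\bigr)^2$ arises from the contribution $(A^{1/2})''(0)B(0)$, via the expansion $(A^{1/2})''(0) = A''(0)/(2A^{1/2}) - A'(0)^2/(4A^{3/2})$, and its coefficient $-\frac{3m_\text{\rm H}(\Sigma)}{2|\Sigma|^2}$ is then fixed by the critical-point substitution $B(0) = \sqrt{16\pi}\,m_\text{\rm H}(\Sigma)/|\Sigma|^{1/2}$.

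The main obstacle will be the bookkeeping in $M''(0)$: Proposition \ref{firstvariationofL} produces many terms involving $\langle A,\He\varphi\rangle$, $\omega(\D\varphi)$, divergences, and curvature contractions, and one has to verify patiently that after integrating by parts, applying the Gauss equation, and using the critical-point identity, they collapse exactly into the simple combinations that multiply $(L\varphi)^2$, $\varphi L\varphi$, $|\D\varphi|^2$, and $\varphi^2$ in the stated formula. Modulo this bookkeeping, the structure of the argument is forced by the product-rule decomposition above.
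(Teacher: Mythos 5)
Your plan is essentially the paper's own proof: the paper obtains the formula by exactly this direct computation, combining $\dt H_t=L(t)\rho_t$, the second variation of the area element, and Proposition \ref{firstvariationofL}, and your decomposition $m_\text{\rm H}=(16\pi)^{-1/2}A^{1/2}B$ is just the natural bookkeeping for it. Your reduction to variations with $\left.\dt\rho_t\right|_{t=0}=0$ is legitimate, but the honest justification is the critical-point hypothesis --- the acceleration of the variation enters $\left.\ddt m_\text{\rm H}\right|_{t=0}$ only through the first variation applied to the acceleration field, which vanishes --- not reparametrisation alone; the paper handles the tangential part of this via the $\di_\Sigma(\nabla_X X)$ term in its area-element formula.

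There is, however, one step that fails as asserted: your claim that the critical-point substitution ``fixes'' the printed coefficient $-\frac{3\,m_\text{\rm H}(\Sigma)}{2|\Sigma|^2}$ of the squared integral. Carry the substitution out: with $S=A^{1/2}$ and $B'(0)=-S'(0)B(0)/S(0)=-A'(0)B(0)/(2A(0))$ one gets
\[
S''B+2S'B'=\pl \frac{A''}{2A^{1/2}}-\frac{(A')^2}{4A^{3/2}}-\frac{(A')^2}{2A^{3/2}}\pr B ,
\]
and since $(16\pi)^{-1/2}B(0)=m_\text{\rm H}(\Sigma)/|\Sigma|^{1/2}$, your route yields
\[
\left.\ddt m_\text{\rm H}(\Sigma(t))\right|_{t=0}=\frac{m_\text{\rm H}(\Sigma)}{2|\Sigma|}\,A''(0)-\frac{3\,m_\text{\rm H}(\Sigma)}{4|\Sigma|^2}\pl\int_\Sigma H\varphi\,\dsigma\pr^2+\frac{|\Sigma|^{1/2}}{(16\pi)^{1/2}}\,B''(0),
\]
i.e.\ the nonlocal term carries the coefficient $-\frac{3\,m_\text{\rm H}(\Sigma)}{4|\Sigma|^2}$, not $-\frac{3\,m_\text{\rm H}(\Sigma)}{2|\Sigma|^2}$; and since $A''(0)$ and $M''(0)$ (hence $B''(0)$) are integrals of local densities, no other contribution of the form $\bigl(\int_\Sigma H\varphi\,\dsigma\bigr)^2$ can arise to supply the missing factor of $2$. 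In fact your value is the consistent one: in the paper's own specialization to deSitter--Schwarzschild slices at the end of the appendix, the only nonlocal term is $\frac{3\,m_a}{4|\Sigma|}H^2\int_\Sigma(\varphi-\overline{\varphi})^2\,\dsigma$, which, expanded via $\int_\Sigma(\varphi-\overline{\varphi})^2\,\dsigma=\int_\Sigma\varphi^2\,\dsigma-\frac{1}{|\Sigma|}\bigl(\int_\Sigma\varphi\,\dsigma\bigr)^2$ with $H$ constant, gives exactly $-\frac{3\,m_a}{4|\Sigma|^2}\bigl(\int_\Sigma H\varphi\,\dsigma\bigr)^2$. So the $\frac{3}{2}$ in the printed statement is evidently a typo for $\frac{3}{4}$, and your argument, executed faithfully, proves the proposition with the corrected coefficient --- but you must do this two-line cross-term computation rather than assert that the substitution reproduces the coefficient as printed, since as written your plan promises a constant it does not deliver.
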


\begin{proof}

Once establishing \eqref{derivadadeH}, the above follows after a direct computation using the second variation formula of the area element:
$$\left.\ddt (\dsigma_t )\right|_{t=0} = \left[ |\nabla \varphi|^2 - (\Ric (\nu,\nu) + |A|^2)\varphi^2 + H^2\varphi^2 +  \di_\Sigma (\nabla_X X)\right] \dsigma,$$
where $X(x)=\frac{\partial f}{\partial t} (0,x)$.
\end{proof}

To finish this section, we will consider the particular case where $(M,g)$ is the deSitter-Schwarzschild manifold $(\mathbb{R}\times\mathbb{S}^2,g_a)$ and $\Sigma\subset M$ is some slice $\{r\}\times\mathbb{S}^2$. In this case, we have:
\begin{itemize}
\item $R$ is constant equal to $2$;
\item $\Sigma$ is totally umbilic and has constant Gauss curvature;
\item $\omega=0$ (recall definition in Proposition \ref{firstvariationofL}).
\end{itemize}

Therefore, we have by \eqref{firstvariationofL} that
$$
L^\prime(0)\varphi=2H\varphi\Delta\varphi+\dfrac{3}{2}H\pl\Ric(\nu,\nu)+\dfrac{H^2}{2}\pr\varphi^2-\dfrac{4\pi}{|\Sigma|}H\varphi^2.
$$

Thus, since
$$
\Ric(\nu,\nu)+\frac{H^2}{2}=\frac{8\pi}{|\Sigma|}-(16\pi)^{3/2}\frac{3}{4}\frac{m_\text{\rm H}(\Sigma)}{|\Sigma|^{3/2}},
$$
we have after a direct but long computation using \eqref{secondvariation}
\begin{align*}
\left.\ddt\right|_{t=0}m_\text{\rm H}(\Sigma(t))&=-\frac{|\Sigma|^{1/2}}{32\pi^{3/2}}\int_{\Sigma}(\Delta\varphi)^2\,\dsigma+\frac{1}{4\pi^{1/2} |\Sigma|^{1/2}}\int_{\Sigma}|\D\varphi|^2\,\dsigma\\
&-\dfrac{3\,m_\text{\rm H}(\Sigma)}{2|\Sigma|}\int_{\Sigma}|\D\varphi|^2\,\dsigma+\dfrac{3\,m_\text{\rm H}(\Sigma)}{4|\Sigma|}H^2\int_{\Sigma}(\varphi-\overline{\varphi})^2\,\dsigma,
\end{align*}
where $\overline{\varphi}=\frac{1}{|\Sigma|}\int_\Sigma\varphi\,\dsigma$ and we have used in the above that
$$
\int_\Sigma\pl\varphi-\overline{\varphi}\pr^2\,\dsigma=\int_\Sigma\varphi^2\,\dsigma-\frac{1}{|\Sigma|}\pl\int_\Sigma\varphi\,\dsigma\pr^2.
$$

\section{Acknowledgements}
We are very grateful to Fernando C. Marques for his support and mentorship. We also thank Andr\'{e} Neves for many helpful discussions, and Karen Uhlenbeck who kindly supported our work through her Sid W Richardson Regents Foundation Chair 3 in Mathematics.~ D.M. thanks his advisor Dan Knopf for his encouragement and both authors would like to thank FAPERJ, CNPq-Brazil and NSF for their financial support.

\bibliographystyle{ams}

\end{document}